\numberwithin{equation}{section}
\numberwithin{equation}{section}
\newcommand{\R}{\mathbb{R}}
\newcommand{\N}{\mathbb{N}}
\newcommand\e\varepsilon
\newtheorem{Thm}{Theorem}[section]
\newtheorem{Lem}{Lemma}[section]
\newtheorem{Prop}{Proposition}[section]
\newtheorem{Def}{Definition}[section]
\newtheorem{Rem}{Remark}[section]
\begin{document}

\title[Nonlinear Grushin equation  ]
{ On a class of Nonlinear Grushin {equations} }

\author[W. Bauer, Y. Wei  and X. Zhou]{Wolfram Bauer, Yawei Wei  and Xiaodong Zhou}

\address[Wolfram Bauer]{Leibniz Universit\"{a}t Hannover, Institut f\"{u}r Analysis, Welfengarten 1, 30167
Hannover, Germany}
\email{bauer@math.uni-hannover.de, bauerwolfram@web.de}

\address[Yawei Wei]{School of Mathematical Sciences and LPMC, Nankai University, Tianjin 300071, China}
\email{weiyawei@nankai.edu.cn}

\address[Xiaodong Zhou]{School of Mathematical Sciences, Nankai University, Tianjin 300071, China}
\email{1120210030@mail.nankai.edu.cn}

\thanks{
affiliations: {\bf Wolfram Bauer} {\bf (corresponding author}, ORCID: 0000-0001-5368-8708, {\it Institut f\"{u}r Analysis, Welfengarten 1, 30167 Hannover, Germany}, {\bf email} \texttt{bauer@math.uni-hannover.de}; {\bf Yawei Wei} ORCID: 0000-0002-9743-917X, {\it School of Mathematical Sciences and LPMC, Nankai University, Tianjin 300071, China} {\bf email:} \texttt{weiyawei@nankai.edu.cn}; {\bf Xiaodong Zhou}, ORCID: 0009-0000-7962-6521, {\it School of Mathematical Sciences, Nankai University, Tianjin 300071, China} {\bf email:} \texttt{1120210030@mail.nankai.edu.cn}; \\
Acknowledgements: This work is supported by the NSFC under the grands 12271269 and the Fundamental Research Funds for the Central Universities.}

\keywords {Grushin operator, Moving plane method, Blow up analysis, {Topological degree method}}

\subjclass[2020]{35J70; 35B45; 35A16}



\begin{abstract} In this paper, we study {two} kinds of nonlinear degenerate elliptic equations containing the Grushin operator. First, we prove radial symmetry and a decay rate at infinity of solutions to such a Grushin equation by using the moving plane method in combination with suitable integral inequalities. Applying similar methods, we obtain nonexistence results for solutions to a second type of Grushin equation in Euclidean half space. Finally, we derive a priori estimates {and the existence} for positive solutions to more general types of Grushin equations by employing blow up analysis and topological degree methods, respectively.
\end{abstract}

\maketitle
\section{Introduction}

\setcounter{equation}{0}

In this paper, we establish a priori estimates and prove existence results for positive solutions to the following semilinear degenerate elliptic equation
\begin{equation}\label{002.1}
\begin{cases}
-\Delta_\gamma u=f(z,u),\;\;u>0,&~ \mbox{in} ~\Omega,\\[2mm]
u=0,  &~ \mbox{on} ~\partial\Omega,
\end{cases}
\end{equation}
where $\Omega\subset\R^{N+l}$ is a bounded open domain containing the origin and with $C^1$ boundary $\partial\Omega$. The right hand side
$f(z,u):\Omega\times\R\rightarrow\R$ is a continuous function satisfying some technical conditions specified below.
Here, $\Delta_\gamma$ is the well-known $Grushin\; operator$ given by
\begin{equation}\label{005.250}
  \Delta_\gamma u(z)=\Delta_xu(z)+|x|^{2\gamma}\Delta_yu(z),
\end{equation}
where, $\Delta_x$ and $\Delta_y$ denote the Laplace operators in the variables $x$ and $y$, respectively, with $z=(x,y)\in\R^{N+l}$ and $N+l\geq3$. Moreover,
$\gamma>0$ is a real number and  we write
\begin{equation}\label{wys3}
  N_\gamma=N+(1+\gamma)l
\end{equation}
for the associated homogeneous dimension. In some of our results we assume that $\gamma \in \mathbb{N}$ is an integer in which case $\Delta_{\gamma}$ is a H\"{o}rmander operator.

As a first step in the analysis we prove a symmetry result by using the moving plane method for the following equation:
\begin{equation}\label{0}
    \begin{cases}
-\Delta_\gamma u+ u=u^p,\;\;u>0,&~ \mbox{in} ~\R^{N+l},\\[2mm]
u(z)\rightarrow0,\;\;&~ \mbox{as} ~d(z,0)\rightarrow\infty,
\end{cases}
\end{equation}
where $p>1$, and $d(z,0)$ is a suitably defined distance on $\R^{N+l}$, namely
\begin{equation}\label{006.6}
  d(z,0):=\left(\frac{1}{(1+\gamma)^2}|x|^{2+2\gamma}+|y|^2\right)^{\frac{1}{2+2\gamma}}.
\end{equation}
Then we provide two nonexistence results by applying the moving plane method to the following problems:
\begin{equation}\label{32}
    \begin{cases}
-\Delta_\gamma u=u^p,\;\; u>0,&~ \mbox{in} ~\R^{N+l},\\[2mm]
u(z)\rightarrow0,\;\;&~ \mbox{as} ~d(z,0)\rightarrow\infty,
\end{cases}
\end{equation}
where {$1<p<\frac{N_\gamma+2}{N_\gamma-2}$}, and
\begin{equation}\label{004.3}
\begin{cases}
-\Delta_\gamma u=u^p,\;\;u>0,&~ \mbox{in} ~\R^{N+l}_+,\\[2mm]
u=0,  &~ \mbox{on} ~\partial\R^{N+l}_+,
\end{cases}
\end{equation}
where {$1+\frac{4}{N_\gamma}\leq p<\frac{N_\gamma+2}{N_\gamma-2}$}. Throughout the paper $\R^{N+l}_+$ denotes the half space, i.e.
\begin{equation}\label{004.1}
  \R^{N+l}_+=\{z=(x,y)\in\R^{N+l}|\;z=(x,\tilde{y}^{\prime},y_l), y_l>0\}, \hspace{3ex} \mbox{\it with} \hspace{3ex}  \tilde{y}'=(y_1,\cdots,y_{l-1}).
\end{equation}
We mention that the nonexistence result for \eqref{32} is known and directly follows from \cite[Theorem 1.1]{17} by Yu.
Establishing a priori estimates for positive solutions to \eqref{002.1} leads to Liouville type theorems for
\eqref{32} and \eqref{004.3} by blow up analysis. A proof of the existence of positive solutions to \eqref{002.1} essentially relies on these a priori estimate combined with  topological degree methods.

The Grushin operator $\Delta_{\gamma}$ was first studied by Grushin in \cite{6} and \cite{7}. It is worth noting that $\Delta_\gamma$ is elliptic for $x\neq0$ and degenerates on the manifold $\{0\}\times\R^l$. When $\gamma\in\N$, $\Delta_{\gamma}$ is a H\"{o}rmander operator; furthermore when $\gamma>0$ is an even integer, the vector fields $X_1=\frac{\partial}{\partial x_1}$, $\cdots$, $X_N=\frac{\partial}{\partial x_N}$, $Y_1=|x|^{\gamma}\frac{\partial}{\partial y_1}$, $\cdots$, $Y_l=|x|^{\gamma}\frac{\partial}{\partial y_l}$ are smooth and satisfy H\"ormander's condition, which implies hypoellipticity of $\Delta_\gamma$. Geometrically, $\Delta_\gamma$ is induced from a sub-Laplace operator on a nilpotent Lie group of step $\gamma+1$ (for $\gamma \in \mathbb{N}$) by a submersion. Specific explanation
of the geometric framework can be found in \cite{Bauer1} by Bauer et al. Abatangelo et al. \cite{yin} used a blow-up monotonicity argument to recover the asymptotic behavior of solutions to the linear Grushin equation with a potential, and they also studied the Grushin operator in spherical coordinates. The existence results for nonlinear degenerate elliptic Grushin type equations such as \eqref{0} were given in \cite{4}.

This type of equation may have no variational structure. Therefore we exploit techniques from the theory of nonlinear operators rather than calculus of variations. More specifically, we derive a priori estimates for positive solutions to the nonlinear problem \eqref{002.1}. Blow up analysis is essentially used, and finally this problem is reduced to the proof of Liouville type theorems in
$\R^{N+l}$ and $\R^{N+l}_+$. We emphasize that the derivation of Liouville type theorems corresponding to the above problems is a common technique and of great importance.
During the last decades various authors have contributed to this field. Gidas and his collaborator initially introduced such a procedure in the
analysis of certain superlinear elliptic equations in \cite{9}. Later Chang included it into his textbook \cite{8}. For the following equation:
\begin{equation}\label{005.3}
\begin{cases}
  \displaystyle\sum_{i,j=1}^n\frac{\partial}{\partial x_i}\left(a_{ij}(x)\frac{\partial u}{\partial x_j}\right)+\sum_{j=1}^nb_j(x)\frac{\partial u}{\partial x_j}+f(x,u)=0,\;\;&~ \mbox{in} ~\Omega,\\[2mm]
u=\phi,\;\;&~ \mbox{on} ~\partial\Omega,
\end{cases}
\end{equation}
in which $\Omega\subset\R^n$ is an open bounded domain with smooth boundary, $f\in C(\overline{\Omega}\times\R)$ is superlinear in $u$, $a_{ij}(x)$, $b_j(x)\in C(\overline{\Omega})$, $i,j=1,\cdots,n$, $\phi\in C^1(\partial\Omega)$ is nonnegative, the authors provide a priori bounds for positive solutions and then apply degree theory to obtain the solutions to \eqref{005.3}. They introduced a useful method - the scaling method - based on Liouville type theorems for the equations:
\begin{equation}\label{005.4}
  -\Delta u=u^p,\;\;u>0,\;\; ~\mbox{in}~\R^n,
\end{equation}
and
\begin{equation}\label{005.5}
\begin{cases}
-\Delta u=u^p,\;\;&~ \mbox{in} ~\R^n_+,\\[2mm]
u=0,\;\;&~ \mbox{on} ~\partial\R^n_+,
\end{cases}
\end{equation}
where $p\in(1,\frac{n+2}{n-2})$, $n\geq3$, $\R^n_+=\{x\in\mathbb{R}^n\:|\;x=(x_1, x_2,\cdots, x_n), x_n>0\}$.
Liouville type theorems for \eqref{005.4} and \eqref{005.5} were studied by Gidas and his collaborator in \cite{5} and \cite{9}, respectively. Finally, it has been observed that Equation \eqref{005.3} has at least one positive solution based on the Leary Schauder degree theorem in \cite{10}.

Other relevant examples are nonlinear equations involving nonlocal elliptic operators including the fractional Laplacian and have been studied by Chen together with his collaborators in \cite{11}. More precisely, the following problem is considered:

\begin{equation}\label{005.6}
\begin{cases}
(-\Delta)^{\frac{\alpha}{2}}u(x)=u^p(x),\;\;& x\in\Omega,\\[2mm]
u(x)\equiv0,\;\;& x\notin\Omega,
\end{cases}
\end{equation}
where
\begin{equation}\label{005.7}
\begin{aligned}
  (-\Delta)^{\frac{\alpha}{2}}u(x)
  &=C_{n,\alpha}P.V.\int_{\R^n}\frac{u(x)-u(z)}{|x-z|^{n+\alpha}}dz
\\&=C_{n,\alpha}\lim_{\varepsilon\rightarrow0^+}\int_{\R^n\setminus B_\varepsilon(x)}\frac{u(x)-u(z)}{|x-z|^{n+\alpha}}dz
\end{aligned}
\end{equation}
is a nonlocal operator, $0<\alpha<2$, $1<p<\frac{n+\alpha}{n-\alpha}$ and $\Omega\subset\R^n$ is a bounded domain with $C^2$ boundary.
As before, the authors develop a direct blow-up and rescaling argument to obtain a priori estimates for \eqref{005.6}. This problem finally is reduced to the proof of Liouville type theorems for the equations:
\begin{equation}\label{005.8}
(-\Delta)^{\frac{\alpha}{2}}u(x)=u^p(x),\;\;u(x)>0,\;\; x\in\R^n,
\end{equation}
and
\begin{equation}\label{005.9}
\begin{cases}
(-\Delta)^{\frac{\alpha}{2}}u(x)=u^p(x),\;\;& x\in\R^n_+,\\[2mm]
 u(x)=0,\;\;& x\notin\R^n_+.
\end{cases}
\end{equation}
Topological degree theories are introduced that are necessary to derive the existence results and in \cite{11} various forms are explained in detail. We only give the simplest example in order to
demonstrate the method. Liouville type theorems on $\R^n$ and $\R^n_+$ were studied by Chen and his collaborator in \cite{12} and \cite{13}, respectively.

It is crucial that a Liouville type theorem for the corresponding limit problem induces a priori estimates. It should also be mentioned that the moving plane method is a powerful tool for obtaining Liouville type theorems and has been applied previously by many authors. Guo and Liu in \cite{14} studied Liouville type theorems for positive solutions to a general classical system of equations. The key tool in \cite{14} is the moving plane method combined with integral inequalities. Similar results exist in connection with the Grushin operator
$\Delta_{\gamma}$ above. In \cite{15} Wang proved a Liouville type theorem
for positive weak solutions to a nonlinear system containing $\Delta_{\gamma}$. As in \cite{14}, a key tool in \cite{15} is the moving plane method together with integral inequalities.

For a single equation containing the Grushin operator related aspects have been studied. Monticelli in \cite{1} established a maximum principle for a class of linear, degenerate elliptic differential operators of second order. In particular, this applies to the Grushin operator. Based on such a maximum principle the author proved a nonexistence result for classical solutions to Grushin equations on Euclidean space and with subcritical growth. Monti et al. \cite{zs4} studied positive solutions to critical semilinear Grushin type equations via the Kelvin transformation. Yu in \cite{17} obtained the nonexistence of positive solutions in Euclidean space for an elliptic equation involving the Grushin operator with a general nonlinearity. Here we obtain a nonexistence result in case of the Grushin equation in Euclidean half space with nonlinear term of the form $u^p$.

Special attention to problems involving the Grushin operator is also given in \cite{4} and \cite{41}, Monti \cite{3}, Monticelli and Payne \cite{22}, Tri \cite{Tri4} and \cite{Tri5}, Metafune et al. \cite{Lp12}.
\vspace{1ex}\par
In order to explicitly state our main results we now introduce some notations and preliminaries that will be used throughout the paper.
We define the weighted Sobolev space 
\begin{equation}\label{006.2}
  H^{1,2}_\gamma(\R^{N+l})=\left\{u\in L^2(\R^{N+l})\; |\;\frac{\partial u}{\partial x_i}, |x|^\gamma\frac{\partial u}{\partial y_j}\in L^2(\R^{N+l}), i=1,\cdots,N, j=1,\cdots,l\right\}.
\end{equation}
Note that $H^{1,2}_\gamma(\R^{N+l})$ endowed with the inner product:
\begin{equation}\label{006.3}
  \langle u,v\rangle_\gamma=\int_{\R^{N+l}}\nabla_\gamma u \cdot \nabla_\gamma v+uvdz,
\end{equation}
is a Hilbert space. Here we write $\nabla_{\gamma}$ for the gradient $\nabla_{\gamma}:=(\partial_{x_1}, \ldots, \partial_{x_N}, |x|^\gamma\partial_{y_1}, \ldots, |x|^\gamma\partial_{y_l})$. Moreover, the norm on $H^{1,2}_\gamma(\R^{N+l})$ induced by \eqref{006.3} is denoted by:
\begin{equation}\label{006.4}
  \|u\|_\gamma=\Big(\int_{\R^{N+l}}|\nabla_\gamma u|^2+|u|^2dz\Big)^\frac{1}{2}.
\end{equation}
Occasionally we write $\| \cdot\|_{H_{\gamma}^{1,2}(\Omega)}$ instead of $\| \cdot \|_{\gamma}$ where $\Omega$ is a domain in $\mathbb{R}^{N+l}$.
The above definitions can be found in \cite{4} and \cite{2} and according to the Sobolev inequality studied by Monti in \cite{3}:
\begin{equation}\label{006.5}
  \left(\int_{\R^{N+l}}|u|^{2^\ast_\gamma}dz\right)^{\frac{2}{2^\ast_\gamma}}\leq C(N,l,\gamma)\int_{\R^{N+l}}|\nabla_\gamma u|^2dz,
\end{equation}
we conclude that the embedding $H^{1,2}_\gamma(\R^{N+l})\hookrightarrow L^{2^\ast_\gamma}(\R^{N+l})$ with $2^\ast_\gamma=\frac{2N_\gamma}{N_\gamma-2}$ is continuous. When $\Omega\subset\R^{N+l}$ is a bounded domain, $H^{1,2}_\gamma(\Omega)$ can be defined similarly. Furthermore,
\begin{equation}\label{two-norms-GL}
\Big(\displaystyle\int_{\Omega}|\nabla_\gamma u|^2+|u|^2dz\Big)^\frac{1}{2} \hspace{3ex}  \mbox{ and} \hspace{3ex}
\Big(\displaystyle\int_{\Omega}|\nabla_\gamma u|^2dz\Big)^\frac{1}{2}
\end{equation}
are equivalent norms in $H^{1,2}_\gamma(\Omega)$ when $\Omega\subset\R^{N+l}$ is a bounded domain.
Let $d(z,0)$ be the distance on~$\R^{N+l}$ defined in (\ref{006.6}).
For~$z=(x,y)\in\R^{N+l}$ and $r>0$ we denote by
\begin{equation}\label{006.7}
  \widetilde{B}_r(0):=\{z=(x,y)\in\R^{N+l}|\;d(z,0)<r\}
\end{equation}
the open $r$-ball with respect to $d$.

Next,  we recall a maximum principle for a class of linear, degenerate elliptic differential operators of second order generalizing the Grushin operator.
For details we refer to \cite{1}.

Let $\Omega\subset\R^N$ be a bounded and connected domain, and consider the linear differential operator $L$ in $\Omega$:
\begin{equation}\label{006.8}
  Lu:=\sum_{i,j=1}^Na_{ij}(x)D_{ij}u+\sum_{i=1}^Nb_i(x)D_iu+c(x)u
\end{equation}
for $u\in C^2(\Omega)\cap C(\overline{\Omega})$. Assume that $b_i, c \in L^\infty(\Omega)$, {$a_{ij}\in C(\overline{\Omega})$, and $a_{ij}=a_{ji}$ for $i,j=1,\cdots,N$, and $L$ has nonnegative characteristic form in $\Omega$, i.e.
\begin{equation}\label{006.9}
  \sum_{i,j=1}^Na_{ij}(x)\xi_i\xi_j\geq0,\;\;\;\forall x\in\Omega,\;\forall\xi\in\R^N.
\end{equation}
Moreover, let $A(x):=[a_{ij}(x)]$ be a real symmetric matrix and let $\Sigma:=\{x\in\overline{\Omega}\;|\;\det A(x)=0\}$ be the degeneracy set of the operator $L$.
Moreover, assume that $L$ satisfies the conditions $(E_{\xi})$ and $(\Sigma)$ below.
For the convenience of the reader we repeat the definitions in \cite[p. 619]{1} (see \cite[Rem 2.9]{1} for a geometric interpretation):

\vspace{1mm}
\begin{itemize}
\item[$(E_\xi)$] There exist $\beta>0$, $\xi\in\R^N$ with $|\xi|=1$, such that for any $x\in\Omega$, we have
\begin{equation*}
  \langle\xi,A(x)\xi\rangle_{\R^N}:=\sum_{i,j=1}^Na_{ij}(x)\xi_i\xi_j\geq\beta>0.
\end{equation*}
\item[$(\Sigma)$] The following conditions are fulfilled: \vspace{1mm}
\begin{itemize}
\item $\Sigma$ has no interior points. We let $\Omega_1, \Omega_2,\cdots$ denote the at most countably many connected components of $\Omega\setminus\Sigma$.
\item  $\Sigma\cap\Omega=\Sigma_1\cup\Sigma_2$, where for all $x_0\in\Sigma_1$ and $\Omega_m$ such that $x_0\in\partial\Omega_m$, there is
$\overline{B_r(x_1)}\subset\overline{\Omega_m}$ such that $x_0\in\partial B_r(x_1)$
\begin{equation*}
 \big{\langle}(x_0-x_1), A(x_0)(x_0-x_1)\big{\rangle}_{\R^N}>0 \hspace{3ex} \mbox{\it and } \hspace{3ex} \overline{B_r(x_1)}\cap\Sigma=\{x_0\}.
\end{equation*}
while for all $x_0\in\Sigma_2$ there exists $\overline{B_r(x_1)}\subset\Omega$ such that $x_0\in\partial B_r(x_1)$
\begin{equation*}
\big{\langle}(x_0-x_1), A(x_0)(x_0-x_1)\big{\rangle}_{\R^N}>0\hspace{3ex} \mbox{\it and} \hspace{3ex}  \overline{B_r(x_1)}\cap\Sigma_2=\{x_0\}.
\end{equation*}
\item  For $i\in\N$ there exists a bijective map $\sigma: \N\rightarrow\N$ with $\sigma(1)=i$ such that for
 every $h\in\N$, $h\geq2$, there is $l\in\N$ with $1\leq l\leq h-1$ and $\Sigma_1\cap\partial\Omega_{\sigma(h)}\cap\partial\Omega_{\sigma(l)}\neq\emptyset$.
\end{itemize}
\end{itemize}

\begin{Prop}\label{lemma1}\textbf{(Theorem 2.2 (Strong Maximum Principle), \cite{1})}\\
Let $u\in C^2(\Omega)\cap C(\overline{\Omega})$ be such that $Lu\geq0$ with $c(x)\leq0$ in $\Omega$ and assume that conditions $(E_\xi)$ and $(\Sigma)$ hold. Then the nonnegative maximum of $u$ in $\overline{\Omega}$ can be attained only on $\partial\Omega$, unless $u$ is constant.
\end{Prop}

\begin{Rem}\label{rem1}
The hypothesis $\displaystyle\sup_{\overline{\Omega}}u=M\geq0$ can be dropped if $c(x)\equiv0$ on $\overline{\Omega}$ (see \cite[Remark 2.11]{1}).
Note that the Grushin operator $\Delta_\gamma$ satisfies $(E_\xi)$ and $(\Sigma)$ (\cite[Remark 4.1 and Lemma 4.2]{1}). Therefore, the strong maximum principle holds for the linear degenerate elliptic operator $\Delta_\gamma$ on a bounded domain $\Omega\subset\R^{N+l}$.
\end{Rem}

\begin{Rem}\label{rem11}
The strong maximum principle also applies to weak solutions in $H^{1,2}_\gamma(\Omega)$ due to the fact that the Grushin operator satisfies mean value formulas in \cite{value}. The mean value theorem does not require the solution to be $C^2$.
\end{Rem}

Our main results can be summarized as follows. We first obtain a statement on the symmetry of solutions.

\begin{Thm}\label{th1.1}
Let $u(z)\in H^{1,2}_\gamma(\R^{N+l})\cap C^0(\R^{N+l})$ be a solution to Equation \eqref{0} with $p>1$. Then $u(z)=u(x,y)$ is radially symmetric with respect to the variable
$y$ and has exponential decay at infinity.
\end{Thm}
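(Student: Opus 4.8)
The plan is to run the moving plane method in the $y$-variables and then establish the decay by a barrier comparison. The structural fact that makes everything work is that the Grushin operator commutes with reflections and rotations in $y$: for a unit vector $e\in\R^{l}$ and $\lambda\in\R$, the reflection $z=(x,y)\mapsto z^{\lambda}:=\bigl(x,\,y-2(y\!\cdot\!e-\lambda)e\bigr)$ across $T_{\lambda}:=\{z:y\!\cdot\!e=\lambda\}$ leaves $\Delta_\gamma=\Delta_x+|x|^{2\gamma}\Delta_y$ invariant, because the weight $|x|^{2\gamma}$ does not see $y$ and $\Delta_y$ is invariant under orthogonal maps; no analogous symmetry exists in $x$, which is precisely why only symmetry in $y$ can be expected. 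Fix such an $e$ (say $e=e_{l}$), write $\Sigma_{\lambda}:=\{z:y\!\cdot\!e>\lambda\}$, $u_{\lambda}(z):=u(z^{\lambda})$ and $w_{\lambda}:=u_{\lambda}-u$. Subtracting the two copies of \eqref{0} and using the mean value theorem yields, in $\Sigma_{\lambda}$,
\[
-\Delta_\gamma w_{\lambda}+\bigl(1-p\,\theta_{\lambda}^{\,p-1}\bigr)w_{\lambda}=0,\qquad w_{\lambda}=0\ \text{on}\ T_{\lambda},
\]
with $\theta_{\lambda}(z)$ between $u(z)$ and $u_{\lambda}(z)$; moreover $w_{\lambda}(z)\to0$ as $d(z,0)\to\infty$ in $\Sigma_{\lambda}$, since then $d(z^{\lambda},0)\to\infty$ as well. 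Because the maximum principle for the Grushin operator is available for weak solutions in $H^{1,2}_\gamma$ (Remark~\ref{rem11}), the pointwise arguments below are legitimate despite the degeneracy on $\{x=0\}$.

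\emph{Initialising the sweep.} If $\lambda$ is large then every $z\in\Sigma_{\lambda}$ has $d(z,0)^{2+2\gamma}\ge|y|^{2}\ge\lambda^{2}$, so $u$ is uniformly small on $\Sigma_{\lambda}$; pick $\lambda$ large enough that $p\,u(z)^{p-1}\le\tfrac12$ there. Were $w_{\lambda}<0$ somewhere, it would attain a negative minimum at an interior point $z_{0}\in\Sigma_{\lambda}$ (it vanishes on $T_\lambda$ and at infinity); there $\theta_{\lambda}(z_{0})<u(z_{0})$ gives $1-p\,\theta_{\lambda}(z_{0})^{p-1}>\tfrac12>0$, so $\Delta_\gamma w_{\lambda}(z_{0})=(1-p\theta_{\lambda}^{\,p-1})w_{\lambda}(z_{0})<0$, contradicting $\Delta_\gamma w_{\lambda}(z_{0})\ge0$ at an interior minimum (at points of $\{x=0\}$ this uses the $\Delta_x$-ellipticity, or the weak formulation). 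Hence $w_{\lambda}\ge0$ in $\Sigma_{\lambda}$ for all large $\lambda$, and we may set $\lambda_{0}:=\inf\{\mu:\ w_{\lambda}\ge0\ \text{in}\ \Sigma_{\lambda}\ \text{for all}\ \lambda\ge\mu\}$.

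\emph{Sweeping and stopping.} If $\lambda_{0}=-\infty$, then comparing $u$ with its reflection for every $\lambda$ shows $u$ is nonincreasing in $y\!\cdot\!e$ on all of $\R$; together with $u>0$ and $u\to0$ as $y\!\cdot\!e\to-\infty$ this is impossible, so $\lambda_{0}\in\R$. By continuity $w_{\lambda_{0}}\ge0$ in $\Sigma_{\lambda_{0}}$, and since $w_{\lambda_0}\ge0$ forces $u_{\lambda_0}^{p}-u^{p}\ge0$, the function $w_{\lambda_{0}}$ is a nonnegative supersolution of $\Delta_\gamma-1$; the strong maximum principle (Lemma~\ref{lemma1} and Remark~\ref{rem1}, applied on bounded truncations and combined with $w_{\lambda_0}\to0$) gives $w_{\lambda_{0}}\equiv0$ or $w_{\lambda_{0}}>0$ throughout $\Sigma_{\lambda_{0}}$. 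In the second case one derives a contradiction with the minimality of $\lambda_{0}$ in the usual way: on a large ball $w_{\lambda_{0}}$ is bounded below by a positive constant away from a boundary layer, so $w_{\lambda}\ge0$ there for $\lambda$ slightly below $\lambda_{0}$; on the exterior region $u$, hence the zeroth order coefficient $1-p\theta_\lambda^{\,p-1}$, stays positive and the argument of the previous paragraph applies; and on the thin strip near $T_{\lambda_{0}}\cap B_{R}$ one uses a narrow-region maximum principle, splitting into the part where $|x|$ is bounded below (there the $|x|^{2\gamma}\Delta_y$-term is uniformly elliptic in $y\!\cdot\!e$ and the strip is thin in that direction) and the part where $|x|$ is small (there the strip is thin in $x$ and $\Delta_x$ is elliptic). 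Hence $w_{\lambda_{0}}\equiv0$, i.e. $u$ is symmetric about $T_{\lambda_{0}}$ in the direction $e$. Since $e$ was arbitrary and \eqref{0} is invariant under all rotations and translations in $y$, $u$ is symmetric about a hyperplane in every $y$-direction, and therefore $u(x,y)$ depends on $y$ only through $|y-y_{0}|$ for some $y_{0}\in\R^{l}$, i.e. $u$ is radially symmetric in $y$. This continuation step, with the extra bookkeeping forced by the degeneracy line $\{x=0\}$ in the narrow-region estimate, is the main obstacle.

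\emph{Exponential decay.} Since $u\to0$, there is $R_{1}$ with $u(z)^{p-1}\le\tfrac12$ for $d(z,0)\ge R_{1}$, so $\Delta_\gamma u=u-u^{p}\ge\tfrac12 u$ there. A direct computation using $d(\widetilde z,0)d(z,0)=1$ (the structure behind Proposition~\ref{ppap}) shows that $|\nabla_\gamma d|$ is bounded on $\R^{N+l}$ and that $\Delta_\gamma d=\frac{N_\gamma-1}{d}\,|\nabla_\gamma d|^{2}$ away from the origin; hence the radial barrier $\phi:=M e^{-\sigma d(z,0)}$ satisfies $\Delta_\gamma\phi=M\sigma e^{-\sigma d}\bigl(\sigma-\tfrac{N_\gamma-1}{d}\bigr)|\nabla_\gamma d|^{2}\le\sigma^{2}\|\nabla_\gamma d\|_\infty^{2}\,\phi\le\tfrac12\phi$ once $\sigma$ is chosen small. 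Picking $M$ so large that $\phi\ge u$ on $\{d=R_{1}\}$ and comparing $u$ with $\phi$ on each annulus $\{R_{1}<d<R\}$ by the maximum principle for $\Delta_\gamma-\tfrac12$ (Lemma~\ref{lemma1}), then letting $R\to\infty$ and using $u\to0$, we obtain $u(z)\le M e^{-\sigma d(z,0)}$ for $d(z,0)\ge R_{1}$, which is the claimed exponential decay at infinity.
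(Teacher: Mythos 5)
Your plan takes a genuinely different route from the paper's: you run the moving plane via pointwise maximum principles and establish the exponential decay last, whereas the paper proves exponential decay \emph{first} (its Step~1) and then carries out the moving plane via integral inequalities (Steps~2--4), testing the difference equation against $\omega_\lambda^{+}\eta_\varepsilon^{2}$, invoking the Sobolev inequality \eqref{006.5}, and controlling the coefficient through the decay-dependent quantity $\bigl(\int_{\Sigma_\lambda^{+}}e^{-a(d(z,0)-R)(p-1)N_\gamma/2}\,dz\bigr)^{2/N_\gamma}$, which is small both when $\lambda$ is large and when $\lambda\to\Lambda^{-}$ since $|\Sigma_\lambda^{+}|\to0$. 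Your initialization and your decay barrier are essentially sound (indeed $|\nabla_\gamma d|^{2}=\frac{1}{(1+\gamma)^{2}}(|x|/d)^{2\gamma}$ is bounded and $\Delta_\gamma d=\frac{N_\gamma-1}{d}|\nabla_\gamma d|^{2}$ is exactly what is encoded in \eqref{8}; the limit $R\to\infty$ should be taken after comparing with $\phi+\epsilon$), and your handling of $\lambda_{0}=-\infty$ and of the radial conclusion is fine.

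The genuine gap is in the continuation step. You invoke a narrow-region maximum principle for $\Delta_\gamma$ near $T_{\lambda_{0}}\cap B_{R}$, but such a principle is not available in the paper or in anything it cites, and for $\Delta_\gamma$ it does not hold in the form you need. The usual mechanism (thin domain $\Rightarrow$ first Dirichlet eigenvalue dominates the zeroth order coefficient) fails for a strip thin in $y_{l}$ precisely on the degeneracy set: the only derivative in the $y_{l}$ direction available to $\Delta_\gamma$ is $|x|^{\gamma}\partial_{y_{l}}$, so the relevant Poincar\'{e} constant in a strip of width $\delta$ scales like $|x|^{2\gamma}/\delta^{2}$ and collapses as $x\to0$; narrowness in $y_{l}$ gives no control of $1-p\theta_\lambda^{\,p-1}$ near $\{x=0\}$. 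Your proposed repair --- split by $|x|$ and use $\Delta_x$-ellipticity where $|x|$ is small --- does not close the argument either: $\{|x|<c\}$ is a slab of \emph{fixed} thickness (it does not shrink as $\lambda\to\lambda_{0}$), and $w_{\lambda}$ does not vanish on the lateral interface $\{|x|=c\}$, so the Dirichlet narrow-region argument in $x$ cannot be applied on that piece and the two regions cannot be glued by comparison alone. The paper sidesteps all of this: its integral estimate only uses \eqref{006.5} and the a~priori decay, and the decay is proved first for exactly the reason that it makes $\int_{\Sigma_\lambda^{+}}u^{(p-1)N_\gamma/2}\,dz$ finite and small for every $p>1$ (for $p$ near $1$ this exponent is below $2$, so membership in $H^{1,2}_\gamma$ alone would not suffice). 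To repair your version you would have to prove a quantitative maximum principle for $\Delta_\gamma$ on sets of small measure that is uniform up to $\{x=0\}$, which is a substantial additional task; alternatively, replace your continuation step with the paper's integral argument, keeping your pointwise initialization if you wish but proving the decay first.
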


In addition we obtain a Liouville type theorem in Euclidean half space $\R^{N+l}_+$.
The corresponding Liouville type theorem in Euclidean space $\R^{N+l}$ is a direct corollary of \cite[Theorem 1.1]{17} by taking $f(t)=t^p$, $1<p<\frac{N_\gamma+2}{N_\gamma-2}$, cf. Lemma \ref{th1.2} below.

\begin{Thm}\label{th1.3}
Let $u(z)\in H^{1,2}_\gamma(\R^{N+l}_+)\cap C^0(\overline{\R^{N+l}_+})$ be a solution to Equation \eqref{004.3} with {$1+\frac{4}{N_\gamma}\leq p<\frac{N_\gamma+2}{N_\gamma-2}$}. If $u$ is bounded in $\R^{N+l}_+$, then $u\equiv0$.
\end{Thm}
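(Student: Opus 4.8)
The plan is to apply the method of moving planes in the direction normal to $\partial\R^{N+l}_+$, i.e.\ the $y_l$-direction, to show that a bounded solution $u$ is strictly increasing in $y_l$; then to pass to the limit $y_l\to\infty$ so as to reduce the problem to a Liouville statement in one fewer $y$-variable; and finally to invoke Theorem~\ref{th1.2} together with the monotonicity to reach a contradiction. The last link is immediate once the rest is in place: a function which is increasing in $y_l$ and tends to $0$ as $y_l\to\infty$ is $\leq 0$, which is incompatible with $u>0$ unless $u\equiv 0$.

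\emph{Step 1 (moving planes).} For $\lambda>0$ set $\Sigma_\lambda=\{z\in\R^{N+l}_+:0<y_l<\lambda\}$, let $z^\lambda=(x,\widetilde y^{\,\prime},2\lambda-y_l)$ be the reflection of $z$ across $\{y_l=\lambda\}$, and write $u_\lambda(z)=u(z^\lambda)$. Since $\Delta_\gamma=\Delta_x+|x|^{2\gamma}\Delta_y$ contains the \emph{full} Laplacian in $y$, it commutes with $y_l\mapsto -y_l$, so $u_\lambda$ solves the same equation as $u$ and $w_\lambda:=u_\lambda-u$ satisfies a linear equation $-\Delta_\gamma w_\lambda=c_\lambda(z)\,w_\lambda$ in $\Sigma_\lambda$ with $c_\lambda=p\,\xi_\lambda^{p-1}\in L^\infty$. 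Using $u=0$ on $\partial\R^{N+l}_+$, $u>0$ inside, and the strong maximum principle for $\Delta_\gamma$ (Lemma~\ref{lemma1} and Remarks~\ref{rem1}, \ref{rem11}) on the narrow slab $\Sigma_\lambda$ with $\lambda$ small, one gets $w_\lambda\geq 0$ in $\Sigma_\lambda$; then, increasing $\lambda$, one shows the procedure never stops, so $w_\lambda\geq 0$ in $\Sigma_\lambda$ for all $\lambda>0$. Taking $\lambda=(a+b)/2$ for $0<a<b$ yields $u(x,\widetilde y^{\,\prime},b)\geq u(x,\widetilde y^{\,\prime},a)$, hence $u$ is nondecreasing in $y_l$, and then $\partial_{y_l}u>0$ in $\R^{N+l}_+$ by the strong maximum principle.

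\emph{Step 2 (reduction and conclusion).} By monotonicity and boundedness, $\bar u(x,\widetilde y^{\,\prime}):=\lim_{y_l\to\infty}u(x,\widetilde y^{\,\prime},y_l)$ exists, is nonnegative and bounded, and interior elliptic estimates for $\Delta_\gamma$ give $u(\,\cdot\,,\,\cdot\,,y_l+t)\to\bar u$ in $C^1_{\mathrm{loc}}$ as $t\to\infty$; consequently $\bar u$ is independent of $y_l$ (so $\partial^2_{y_l}\bar u=0$) and solves $-\Delta_\gamma\bar u=\bar u^p$ in $\R^{N+l-1}$, now with the Grushin operator in the variables $(x,\widetilde y^{\,\prime})$, whose homogeneous dimension is $\widetilde N_\gamma=N+(1+\gamma)(l-1)=N_\gamma-(1+\gamma)$. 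After checking that $\bar u\in H^{1,2}_\gamma(\R^{N+l-1})\cap C^0$, one applies Theorem~\ref{th1.2}: since $\widetilde N_\gamma<N_\gamma$ and $t\mapsto\frac{t+2}{t-2}$ is decreasing, $1<p\leq\frac{N_\gamma+2}{N_\gamma-2}<\frac{\widetilde N_\gamma+2}{\widetilde N_\gamma-2}$, whence $\bar u\equiv 0$. (If $l=1$ the limiting operator is the classical Laplacian on $\R^N$ and one uses the Gidas--Spruck Liouville theorem, noting $p\leq\frac{N_\gamma+2}{N_\gamma-2}<\frac{N+2}{N-2}$; the remaining cases, in which the reduced Euclidean dimension is $\leq 2$, are treated similarly via the known low-dimensional Liouville theorems.) Finally, for every $z\in\R^{N+l}_+$ we obtain $0<u(x,\widetilde y^{\,\prime},y_l)\leq\bar u(x,\widetilde y^{\,\prime})=0$, a contradiction; therefore $u\equiv 0$.

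The hard part is Step 1: executing the moving plane scheme in the \emph{unbounded} half-space for the \emph{degenerate} operator $\Delta_\gamma$ — dealing with the degeneracy set $\{x=0\}$ (where one relies on the fact that the strong maximum principle of Lemma~\ref{lemma1} still holds for $\Delta_\gamma$, cf.\ Remarks~\ref{rem1}, \ref{rem11}) and, above all, controlling the reflected functions at infinity in the $(x,\widetilde y^{\,\prime})$ directions so that the plane can be slid to $\lambda=+\infty$. This is the place where boundedness of $u$, the sign $u^p\geq 0$, and — I expect — the lower restriction $p\geq 1+\frac{4}{N_\gamma}$ enter: that exponent window should be precisely what yields the a priori decay estimates needed both to close the moving plane argument and to place $\bar u$ in the admissible class of Theorem~\ref{th1.2}. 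An alternative to Step 2 would be to combine the monotonicity in $y_l$ with a Rellich--Pohozaev identity for $\Delta_\gamma$ on the half-balls $\widetilde B_R(0)\cap\R^{N+l}_+$ and suitable integral inequalities as $R\to\infty$, in which case $[\,1+\frac{4}{N_\gamma},\frac{N_\gamma+2}{N_\gamma-2}\,]$ would be the range in which the boundary and bulk terms are compatible in sign.
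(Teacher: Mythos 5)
Your high-level strategy is the same as the paper's: move the plane in the $y_l$-direction, show $u$ is monotone in $y_l$, pass to the limit $y_l\to\infty$, and invoke Theorem~\ref{th1.2} in one fewer $y$-variable (your computation $\widetilde N_\gamma=N_\gamma-(1+\gamma)$ and the verification $\frac{N_\gamma+2}{N_\gamma-2}<\frac{\widetilde N_\gamma+2}{\widetilde N_\gamma-2}$ are both in the paper, which writes the latter as $\frac{N_\gamma+1-\gamma}{N_\gamma-3-\gamma}$). Step~2 is therefore essentially the paper's Step~3 and is fine. But Step~1 — which you yourself flag as ``the hard part'' — is left unexecuted, and the mechanism you propose for it is not the one that works.

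You suggest starting the plane via ``the strong maximum principle for $\Delta_\gamma$ on the narrow slab $\Sigma_\lambda$.'' A narrow-domain maximum principle for $\Delta_\gamma$ would require a uniform first-eigenvalue (or ABP-type) bound on a slab that is \emph{unbounded} in the $(x,\widetilde y^{\,\prime})$ directions and meets the degeneracy set $\{x=0\}$; the linearized coefficient $c_\lambda=p\,\xi_\lambda^{p-1}$ also has the ``wrong'' sign. None of this is available, and the paper does not attempt it. Instead, the paper runs Step~1 as an \emph{integral} argument in the spirit of your Theorem~\ref{th1.1}: test the equation for $\omega_\lambda=u-u_\lambda$ with $\omega_\lambda^{+}\zeta_\varepsilon^2$, integrate by parts, apply H\"older and Monti's Sobolev inequality, and pass $\varepsilon\to 0$ to reach
\begin{equation*}
  \|\omega_\lambda^{+}\|_{L^{2_\gamma^\ast}(\Sigma_\lambda'^{+})}^2
  \leq C\left(\int_{\Sigma_\lambda'^{+}} u^{(p-1)\frac{N_\gamma}{2}}\,dz\right)^{\frac{2}{N_\gamma}}
  \|\omega_\lambda^{+}\|_{L^{2_\gamma^\ast}(\Sigma_\lambda'^{+})}^2.
\end{equation*}
Taking $\lambda$ small makes the prefactor $<1$ and forces $\omega_\lambda^{+}\equiv 0$; the same inequality near $\Lambda=\sup\{\lambda:\omega_\lambda\le 0\}$ continues the plane, with Lemma~\ref{unbounded1} used only to upgrade $\omega_\Lambda\le 0,\not\equiv 0$ to $\omega_\Lambda<0$. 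This also pins down the role of $p\ge 1+\frac{4}{N_\gamma}$, which you left as a conjecture: it guarantees $(p-1)\frac{N_\gamma}{2}\ge 2$, so together with $p\le\frac{N_\gamma+2}{N_\gamma-2}$ one gets $(p-1)\frac{N_\gamma}{2}\in[2,2_\gamma^\ast]$, and the continuous embedding $H^{1,2}_\gamma(\R^{N+l}_+)\hookrightarrow L^q$ for $q\in[2,2_\gamma^\ast]$ gives $u\in L^{(p-1)N_\gamma/2}(\R^{N+l}_+)$. It is this \emph{global integrability} of $u$ that makes the integral smallness argument close, not a pointwise decay estimate.

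So: same overall plan, correct end-game, but the core mechanism (and the precise use of the exponent window) differs from the paper's and your Step~1 is not a proof. You should replace the narrow-slab maximum principle by the cutoff/Sobolev integral inequality argument above.
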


Finally, we obtain a priori estimates {and the existence} for positive solutions to \eqref{002.1} {under some appropriate assumptions}.

\begin{Thm}\label{th1.4}
Let $\Omega\subset\R^{N+l}$ be a bounded domain with $C^1$ boundary $\partial\Omega$ containing the origin, i.e. $\Omega\cap\{x=0\}$ is not empty, and $\gamma=m \in\N$. { Let $u\in H^{1,2}_\gamma(\Omega)\cap C^\theta(\overline{\Omega})$ be a positive solution to \eqref{002.1}, where $0<\theta\leq1$}. Moreover, suppose that $f(z,t)$ is continuous and for {$p\in[1+\frac{4}{N_\gamma},\frac{N_\gamma+2}{N_\gamma-2})$} uniformly in $z\in \overline{\Omega}$ it satisfies
\begin{equation}\label{002.2}
 \lim_{t\rightarrow+\infty}\frac{f(z,t)}{t^p}=h(z),
\end{equation}
where $h(z)\in C^\theta(\overline{\Omega})$ is strictly positive in $\overline{\Omega}$.
Then there exists $C>0$, such that for any solution $u$ of \eqref{002.1}
\begin{equation*}
 \sup_{z\in\overline{\Omega}}u(z)\leq C.
 \end{equation*}
Here the constant $C$ only depends on $p$, $\Omega$, the behaviour of $f$ in the limit arising in \eqref{002.2} and is independent of the choice of $u$.
\end{Thm}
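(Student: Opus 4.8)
The plan is to argue by contradiction with a blow-up (rescaling) argument of Gidas--Spruck type, adapted to the Grushin dilations $\delta_\lambda(x,y)=(\lambda x,\lambda^{1+\gamma}y)$, under which $\Delta_\gamma$ is homogeneous of degree $2$, i.e.\ $\Delta_\gamma(u\circ\delta_\lambda)=\lambda^{2}\,(\Delta_\gamma u)\circ\delta_\lambda$. If the conclusion fails there are positive solutions $u_k$ of \eqref{002.1} with $M_k:=\sup_{\overline\Omega}u_k=u_k(z_k)\to\infty$; by hypothesis $z_k\in\Omega\cap\{x=0\}$, say $z_k=(0,y_k)$, and after passing to a subsequence $z_k\to z_0=(0,y_0)\in\overline\Omega$. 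Set $\mu_k:=M_k^{-(p-1)/2}\to0$ and define on the corresponding rescaled domain $\Omega_k$ the functions $v_k(\xi,\eta):=M_k^{-1}\,u_k\bigl(\mu_k\xi,\;y_k+\mu_k^{1+\gamma}\eta\bigr)$. The crucial observation is that since $z_k$ lies on the degeneracy set $\{x=0\}$ the translation affects only the $y$-variable, in which $\Delta_\gamma$ has constant coefficients, while the $x$-variable is merely dilated; combining $y$-translation invariance with the homogeneity above we obtain, writing $\Phi_k(\xi,\eta)=(\mu_k\xi,\,y_k+\mu_k^{1+\gamma}\eta)$,
\[
-\Delta_\gamma v_k=M_k^{-1}\mu_k^{2}\bigl(-\Delta_\gamma u_k\bigr)\circ\Phi_k=M_k^{-p}\,f\bigl(\Phi_k,\,M_k v_k\bigr)=\frac{f\bigl(\Phi_k,\,M_k v_k\bigr)}{(M_k v_k)^{p}}\;v_k^{p}\quad\text{in }\Omega_k,
\]
with $0\le v_k\le1$ and $v_k(0)=1$. (Had $z_k$ a nonzero $x$-component, $\Delta_\gamma$ would not be invariant under the needed translation; this is exactly why the location of the maxima is assumed.)

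Second, I would pass to the limit. Since $\Phi_k(\xi,\eta)\to z_0$ pointwise and $h$ is continuous, \eqref{002.2} (uniformly in $z$), together with the growth bound $f(z,u)\le C(1+u^{p})$ which it entails, shows that the right-hand side converges to $h(z_0)v^{p}$ where $v>0$ and tends to $0$ where $v=0$. Standard subelliptic interior Schauder and Harnack estimates for $\Delta_\gamma$ (available from its hypoellipticity and $\delta_\lambda$-homogeneity) give uniform local $C^{2,\alpha}$-type bounds, so along a subsequence $v_k\to v$ in $C^{2}_{\mathrm{loc}}$ with $0\le v\le1$, $v(0)=1$, and $-\Delta_\gamma v=h(z_0)v^{p}$ on the limit domain $\Omega_\infty$; absorbing the constant $h(z_0)>0$ by one more $\delta_\lambda$-dilation we may take $h(z_0)=1$.

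Third, I would analyse $\Omega_\infty$ according to the ratio $\rho_k/\mu_k$, where $\rho_k:=\mathrm{dist}(z_k,\partial\Omega)$. If $\rho_k/\mu_k\to\infty$ (in particular if $z_0\in\Omega$) then $\Omega_\infty=\R^{N+l}$, so $v$ is a bounded nontrivial solution of \eqref{32}; Theorem~\ref{th1.2} forces $v\equiv0$, contradicting $v(0)=1$. If $\rho_k/\mu_k\to0$, rescaling instead by $\rho_k$ one obtains in the limit a bounded $\Delta_\gamma$-harmonic function on a half-space, equal to $1$ at an interior point and to $0$ on the boundary, which is impossible by the strong maximum principle (Remark~\ref{rem1}); so this regime does not occur. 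In the remaining case $\rho_k/\mu_k\to R\in(0,\infty)$, using that $\partial\Omega$ is $C^{1}$ one checks that $\Omega_\infty$ is a half-space; granting (see the discussion below) that it can be normalized, via the $O(N)$-invariance of $\Delta_\gamma$ in $x$ and its $y$-translation invariance, to the standard half-space $\R^{N+l}_+$, the function $v$ is bounded, $v\not\equiv0$, $v=0$ on $\partial\Omega_\infty$, and Theorem~\ref{th1.3} — whose hypothesis $p\ge1+\frac{4}{N_\gamma}$ is precisely the lower bound assumed here — again gives $v\equiv0$, a contradiction. This completes the proof, and since every estimate used depends only on $p$, on $\Omega$ (through the $C^1$ character of $\partial\Omega$) and on the fixed nonlinearity $f$, the bound $C$ is independent of $u$.

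The step I expect to be the main obstacle is the rigorous passage to the limit in precisely the function classes the Liouville theorems require. Boundedness of $v$, all that Theorem~\ref{th1.3} needs, comes for free from $0\le v_k\le1$; but applying Theorem~\ref{th1.2} on $\R^{N+l}$ demands in addition that $v\in H^{1,2}_\gamma(\R^{N+l})\cap C^0$ and that $v(z)\to0$ as $d(z,0)\to\infty$, and this does not follow from $C^{2}_{\mathrm{loc}}$-convergence of a bounded sequence alone; one must either upgrade the local subelliptic estimates to a global energy/decay bound on $v$, or establish the full-space Liouville statement for merely bounded solutions. A second delicate point is to verify, in the case $\rho_k/\mu_k\to R$, that the rescaled boundary flattens to the standard half-space $\R^{N+l}_+$ rather than to some half-space in the $x$-variables not passing through the origin (which $\Delta_\gamma$ treats very differently), so that Theorem~\ref{th1.3} applies; this is where the $C^1$ regularity of $\partial\Omega$ and the fact that the blow-up is centred on $\{x=0\}$ must be used carefully. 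Once these technical points are settled, the trichotomy above yields the desired a priori bound.
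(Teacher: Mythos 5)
Your proposal follows essentially the same blow-up strategy as the paper: rescale at maximum points $q_k=(0,r_k)$ by $\lambda_k=M_k^{-(p-1)/2}$ with the anisotropic Grushin dilation $(x,y)\mapsto(\lambda_k\widetilde x,\,\lambda_k^{1+\gamma}\widetilde y+r_k)$, exploit that these points lie on $\{x=0\}$ so that only a $y$-translation is needed, pass to the limit via subelliptic Schauder estimates, and apply Theorems~\ref{th1.2} and~\ref{th1.3}. There is one genuine (if small) organizational difference: the paper first distinguishes $q\in\Omega$ from $q\in\partial\Omega$, and in the boundary case proves $d_k/\lambda_k^{1+\gamma}$ is bounded below directly from the uniform $C^\theta$ bound (comparing $v_k(0)=1$ with the vanishing of $v_k$ at the rescaled boundary point, see \eqref{002.19}--\eqref{002.21}); you instead propose a trichotomy on $\rho_k/\mu_k$ and rule out the regime $\rho_k/\mu_k\to 0$ by a second rescaling by $\rho_k$, obtaining a bounded $\Delta_\gamma$-harmonic function that attains its supremum at an interior point while vanishing on the boundary, and invoking the strong maximum principle. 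Both arguments are correct and rest on the same uniform interior estimate; the paper's version is more elementary while yours is closer to the classical Gidas--Spruck treatment.

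On the obstacles you flag: you are right that $C^{2}_{\mathrm{loc}}$ convergence of a bounded sequence does not by itself place the limit $v$ in $H^{1,2}_\gamma(\R^{N+l})$ (resp.\ $H^{1,2}_\gamma(\R^{N+l}_+)$) with decay at infinity, which is precisely the function class in which Theorems~\ref{th1.2} and~\ref{th1.3} are stated. The paper's own proof is terse here: it records local $H^{1,2}_\gamma(\widetilde B_R(0))$ bounds and then asserts $v\in H^{1,2}_\gamma(\R^{N+l})$ by a continuation argument, without exhibiting a $k$-uniform global energy bound (which the naive energy estimate, scaled with $\lambda_k$, does not furnish away from the critical exponent). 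So this concern is genuine, but it is a shared issue with the paper rather than a defect unique to your route. Your second concern — that after a $C^1$ straightening of $\partial\Omega$ the limiting domain must be the standard half-space $\R^{N+l}_+$ with boundary $\{y_l=0\}$, rather than a half-space cutting through $x$-space — is likewise a real subtlety that the paper handles only by saying ``for convenience, we use the notation $z=(x,y)$ instead of $\widehat z=(\widehat x,\widehat y)$'' after the coordinate change; your formulation at least makes explicit that the blow-up being centred on $\{x=0\}$ is what keeps the degeneracy set aligned under the change of variables. In short: same approach, a minor and equally valid variant in one step, and the loose ends you identify are also loose in the paper.
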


{
\begin{Thm}\label{th1.5}
Under the assumptions of Theorem \ref{th1.4}, if further $f$ satisfies:
\begin{itemize}
\item[(i)] $f(z,u)\in C(\overline{\Omega}\times\R^+,\R^+)$;
\item[(ii)]  $\displaystyle\lim_{t\rightarrow+\infty}\frac{f(z,t)}{t}>\lambda_1$ uniformly in $z\in\overline{\Omega}$, where $\lambda_1$ is the first eigenvalue of $(-\Delta_\gamma)$ acting on $H^{1,2}_\gamma(\Omega)$ with Dirichlet boundary condition;
\item[(iii)] $\displaystyle\lim_{t\rightarrow0^+}\frac{f(z,t)}{t}<\lambda_1$ uniformly in $z\in\overline{\Omega}$,
\end{itemize}
then Equation \eqref{002.1} has at least one positive solution $u\in H^{1,2}_\gamma(\Omega)\cap C^\theta(\overline{\Omega})$.
%

\end{Thm}
}

Grushin operators $\Delta_{\gamma}$ with $\gamma \in \mathbb{N}$ form classical examples in the family of all H\"ormander operators. This is one reason why we study the nonlinear degenerate elliptic Equations \eqref{002.1}. The main achievements of the paper can be summarized as follows.
We prove radial symmetry with respect to the variable $y$  of solutions to the Equation \eqref{0} and obtain a nonexistence result for positive solutions to the Equation \eqref{004.3}
{in Euclidean half space.} Furthermore, we appropriately scale the $x$ and $y$ variables, respectively, to deal with the asymmetry of the Grushin operators when defining a blow up sequence in order to establish a priori bounds for positive solutions to \eqref{002.1}.
\vspace{1ex}

The paper is organized as follows: in Section 2, we prove Theorem \ref{th1.1} and as a result we obtain the radial symmetry and the decay rate at infinity of solutions to the Equation \eqref{0} by applying the moving plane method combined with appropriate integral inequalities. In Section 3, we prove Theorem \ref{th1.3} to obtain nonexistence results for solutions to the Equations \eqref{004.3} in $\R^{N+l}_+$. Again our arguments are based on the moving plane method and some integral inequalities. In Section 4, we prove Theorem \ref{th1.4} to obtain a priori estimates for positive solutions to the Equation \eqref{002.1} by employing blow up analysis. In Section 5, we prove Theorem \ref{th1.5} to obtain the existence for positive solutions to the Equation \eqref{002.1} under appropriate conditions by applying topological degree methods.

\section{Symmetry result}

In this section, we prove Theorem \ref{th1.1} and show the radial symmetry and the decay rate at infinity of solutions to the Equation \eqref{0}.

To prepare, we state two results that are essential in the proof of the theorem. Since the domain we consider could be unbounded, we use Lemma \ref{unbounded} to substitute the maximum principles that are known in the case of bounded domains, e.g. see \cite{22}. Lemma \ref{unbounded1} is an important ingredient for the moving plane method.

\begin{Lem}\label{unbounded}
Assume that $\Omega\subset\R^{N+l}$ is an unbounded domain contained in ${\widetilde{B}_{R_0}}(0)^c$ for some $R_0>0$, and $\omega(z)$ is a solution {in $ H^{1,2}_\gamma(\Omega)$} of
\begin{equation}\label{un1}
\begin{cases}
-\Delta_\gamma\omega(z) + c(z)\omega(z)\leq0,\;\;&~ \mbox{in} ~\Omega\\[2mm]
\omega(z)\leq0, ~ &~ \mbox{on} ~\partial\Omega.
\end{cases}
\end{equation}
Suppose there exists $0\leq b<N_\gamma-2$, such that
\begin{equation}\label{un2}
  c(z)>-\frac{b(N_\gamma-2-b)|x|^{2\gamma}}{(1+\gamma)^2d^{2+2\gamma}(z,0)},\;\;~\mbox{if}~d(z,0)\geq R_0,
\end{equation}
and
\begin{equation}\label{un3}
  \lim_{d(z,0)\rightarrow\infty}\omega(z)d^b(z,0)=0.
\end{equation}
Then we have
\begin{equation}\label{un4}
  \omega(z)\leq0,\;\;~\mbox{in}~\Omega.
\end{equation}

\end{Lem}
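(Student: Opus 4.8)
\section*{Proof proposal for Lemma \ref{unbounded}}

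The plan is to compare $\omega$ with the explicit barrier $g(z):=d^{-b}(z,0)$, which hypothesis \eqref{un2} makes into a positive supersolution of $-\Delta_\gamma+c(\cdot)$ on $\Omega$. First I would record the pointwise identities for the gauge on $\R^{N+l}\setminus\{0\}$, namely $|\nabla_\gamma d(z,0)|^2=\frac{|x|^{2\gamma}}{(1+\gamma)^2d^{2\gamma}(z,0)}$ (a direct computation from \eqref{006.6}) and, recalling that $d^{2-N_\gamma}(z,0)$ is a constant multiple of the fundamental solution of $-\Delta_\gamma$, $\Delta_\gamma d(z,0)=(N_\gamma-1)d^{-1}(z,0)|\nabla_\gamma d(z,0)|^2$. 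The chain rule then gives, for every real $\alpha$,
\[
\Delta_\gamma d^{\alpha}(z,0)=\alpha(\alpha+N_\gamma-2)\,d^{\alpha-2}(z,0)\,|\nabla_\gamma d(z,0)|^2
=\alpha(\alpha+N_\gamma-2)\,\frac{|x|^{2\gamma}}{(1+\gamma)^2d^{2+2\gamma}(z,0)}\,d^{\alpha}(z,0),
\]
so that, taking $\alpha=-b$,
\[
-\Delta_\gamma g(z)+c(z)g(z)=\Bigl(c(z)+\tfrac{b(N_\gamma-2-b)|x|^{2\gamma}}{(1+\gamma)^2d^{2+2\gamma}(z,0)}\Bigr)g(z)=:\widetilde c(z)\,g(z),
\]
and $\widetilde c(z)\ge 0$ on all of $\Omega$ by \eqref{un2} together with $\Omega\subset\widetilde B_{R_0}(0)^c$. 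Since $d(z,0)\ge R_0>0$ on $\Omega$, the barrier $g$ is smooth and strictly positive on $\overline\Omega$, and $\tfrac{2}{g}\nabla_\gamma g=-\tfrac{2b}{d}\nabla_\gamma d$ is bounded on $\Omega$ because $|x|^{2+2\gamma}\le(1+\gamma)^2d^{2+2\gamma}$. Note that the barrier is tuned exactly to hypothesis \eqref{un2}.

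Next I would pass to $v(z):=\omega(z)/g(z)=\omega(z)d^{b}(z,0)$. Inserting $\omega=vg$ into \eqref{un1} via the product rule $\Delta_\gamma(vg)=g\Delta_\gamma v+v\Delta_\gamma g+2\nabla_\gamma v\cdot\nabla_\gamma g$ and dividing by $g>0$, one finds that $v$ satisfies
\[
\mathcal L v:=\Delta_\gamma v+\tfrac{2}{g}\,\nabla_\gamma g\cdot\nabla_\gamma v-\widetilde c(z)\,v\ \ge\ 0\quad\text{in }\Omega,\qquad v\le 0\quad\text{on }\partial\Omega .
\]
The operator $\mathcal L$ has the same principal part as $\Delta_\gamma$, hence satisfies conditions $(E_\xi)$ and $(\Sigma)$; its first-order coefficients are bounded on bounded subsets of $\Omega$, and its zeroth-order coefficient $-\widetilde c$ is $\le 0$. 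Thus $\mathcal L$ is covered by Lemma \ref{lemma1} (and by Remark \ref{rem11} if $\omega$, hence $v$, is only a weak $H^{1,2}_\gamma$ solution).

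Finally I would run the truncation argument. Suppose $v(z_*)>0$ for some $z_*\in\Omega$. By \eqref{un3} choose $R_1>\max\{R_0,d(z_*,0)\}$ with $v(z)<\tfrac12 v(z_*)$ whenever $d(z,0)\ge R_1$, and set $\Omega_{R_1}:=\Omega\cap\widetilde B_{R_1}(0)$. On the bounded set $\overline{\Omega_{R_1}}$ the continuous function $v$ attains its maximum at some $z_0$ with $v(z_0)\ge v(z_*)>0$; since $v\le 0$ on $\partial\Omega$ and $v<\tfrac12 v(z_*)<v(z_0)$ on $\{d(z,0)=R_1\}$, the point $z_0$ is interior to $\Omega_{R_1}$. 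Applying Lemma \ref{lemma1} together with Remark \ref{rem1} to $\mathcal L v\ge0$ on the connected component of $\Omega_{R_1}$ containing $z_0$: the nonnegative maximum $v(z_0)$ can be attained at an interior point only if $v$ is constant on that component, whence $v\equiv v(z_0)>0$ there and so $v>0$ on a portion of its boundary, which is contained in $\partial\Omega\cup\{d(z,0)=R_1\}$ — contradicting the two boundary bounds. (Components of $\Omega_{R_1}$ lying entirely inside $\widetilde B_{R_1}(0)$ have boundary in $\partial\Omega$, where $v\le0$, and are disposed of directly by the maximum principle; since $\Omega$ is unbounded this case does not obstruct the argument.) Hence $v\le 0$, i.e. $\omega\le 0$ in $\Omega$.

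The main obstacle is getting the barrier computation precisely right so that the sign condition produced by $\Delta_\gamma d^{-b}$ matches \eqref{un2} on the nose, and then checking rigorously that the transformed operator $\mathcal L$ really does fall under the strong maximum principle of Lemma \ref{lemma1} on the truncated (possibly disconnected) domains $\Omega_{R_1}$ — in particular in the weak formulation, where one relies on the mean value property invoked in Remark \ref{rem11}. The remaining steps are routine.
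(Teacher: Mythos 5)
Your proposal is correct and hinges on exactly the same ideas as the paper: the barrier $\phi(z)=d^{-b}(z,0)$, the substitution $v=\omega/\phi=\omega\,d^{b}$, and the computation
\begin{equation*}
\Delta_\gamma d^{-b}(z,0)=-\,b(N_\gamma-2-b)\,\frac{|x|^{2\gamma}}{(1+\gamma)^2 d^{2+2\gamma}(z,0)}\,d^{-b}(z,0),
\end{equation*}
which turns the zeroth-order coefficient of the transformed operator into $c(z)-\Delta_\gamma\phi/\phi$, strictly positive by \eqref{un2}. The difference is the endgame. You truncate to $\Omega_{R_1}=\Omega\cap\widetilde B_{R_1}(0)$ and invoke the strong maximum principle of Lemma \ref{lemma1} (with Remark \ref{rem11} in the weak setting) for the transformed operator $\mathcal L$, which obliges you to check that $\mathcal L$ — with its extra first-order term $\tfrac{2}{\phi}\nabla_\gamma\phi\cdot\nabla_\gamma$ and its nonpositive zeroth-order coefficient — still satisfies the hypotheses of Monticelli's theorem on the (possibly disconnected) truncated domain; you correctly flag this as the delicate point. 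The paper avoids all of that: it simply observes that if $\overline\omega=v$ has a positive interior maximum at some $z^0$ (its existence guaranteed by \eqref{un3} and $\overline\omega\le 0$ on $\partial\Omega$), then $\nabla_\gamma\overline\omega(z^0)=0$ kills the gradient term and $-\Delta_\gamma\overline\omega(z^0)\ge 0$ at the interior max, so the transformed inequality evaluated at $z^0$ gives a strictly positive left-hand side, contradicting $\le 0$ immediately. So the paper's finishing move is a one-line second-derivative test with no appeal to the SMP at all, whereas yours imports the SMP machinery and inherits the bookkeeping that comes with it; both are valid, but the paper's is cleaner. Your remark that the case $b=0$ reduces to $\phi\equiv1$ aligns with the paper's separate treatment of that case.
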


\begin{proof}

\vskip 0.2cm

\textbf{We first consider the case $0<b<N_\gamma-2$}.

\vskip 0.2cm
Let $\overline{\omega}(z)=\frac{\omega(z)}{\phi(z)}$, where $\phi(z)>0$. Then $\omega(z)=\overline{\omega}(z)\phi(z)$ and we derive
\begin{equation}\label{un5}
  \Delta_\gamma\omega(z)=\phi(z)\Delta_\gamma\overline{\omega}(z)+\overline{\omega}(z)\Delta_\gamma\phi(z)
  +2\nabla_\gamma\overline{\omega}(z)\cdot\nabla_\gamma\phi(z).
\end{equation}
Since $\phi(z)>0$, Equation \eqref{un1} transforms into
\begin{equation}\label{un6}
\begin{cases}
-\Delta_\gamma\overline{\omega}(z) - \frac{2\nabla_\gamma\overline{\omega}(z)\cdot\nabla_\gamma\phi(z)}{\phi(z)} + \left(c(z)-\frac{\Delta_\gamma\phi(z)}{\phi(z)}\right)\overline{\omega}(z)\leq0,\;\;&~ \mbox{in} ~\Omega\\[2mm]
\overline{\omega}(z)\leq0, ~ &~ \mbox{on} ~\partial\Omega.
\end{cases}
\end{equation}
Take
\begin{equation}\label{un7}
  \phi(z)=d^{-b}(z,0),
\end{equation}
then we obtain
\begin{equation}\label{un8}
  \Delta_\gamma\phi(z)=b(b-N_\gamma+2)\frac{1}{(1+\gamma)^2}|x|^{2\gamma}d^{-b-2-2\gamma}(z,0),
\end{equation}
and
\begin{equation}\label{un9}
  -\frac{\Delta_\gamma\phi(z)}{\phi(z)}=\frac{b(N_\gamma-2-b)|x|^{2\gamma}}{(1+\gamma)^2d^{2+2\gamma}(z,0)}.
\end{equation}
By condition \eqref{un2}, one can see that
\begin{equation}\label{un10}
  c(z)-\frac{\Delta_\gamma\phi(z)}{\phi(z)}>0,\;\;~\mbox{if}~d(z,0)\geq R_0.
\end{equation}

By contradiction, suppose that \eqref{un4} is invalid, then $\omega(z)$ is positive somewhere in $\Omega$, and likewise $\overline{\omega}(z)$. According to \eqref{un3}, i.e.
\begin{equation}\label{un11}
  \lim_{d(z,0)\rightarrow\infty}\overline{\omega}(z)=0,
\end{equation}
and $\overline{\omega}(z)\leq0$ on $\partial\Omega$, we know that there exists $z^0\in\Omega$ such that
\begin{equation}\label{un12}
  \overline{\omega}(z^0)=\max_\Omega\overline{\omega}(z)>0.
\end{equation}
We have
\begin{equation}\label{un13}
\begin{aligned}
  &-\Delta_\gamma\overline{\omega}(z^0) - \frac{2\nabla_\gamma\overline{\omega}(z^0)\cdot\nabla_\gamma\phi(z^0)}{\phi(z^0)} + \left(c(z^0)-\frac{\Delta_\gamma\phi(z^0)}{\phi(z^0)}\right)\overline{\omega}(z^0)
\\=&-\Delta_\gamma\overline{\omega}(z^0)
+\left(c(z^0)-\frac{\Delta_\gamma\phi(z^0)}{\phi(z^0)}\right)\overline{\omega}(z^0)
>0.
\end{aligned}
\end{equation}
This contradicts \eqref{un6} and therefore, \eqref{un4} must be valid.

\vskip 0.2cm

\textbf{Next we consider the case $b=0$}.

\vskip 0.2cm

By contradiction, suppose that \eqref{un4} is invalid. Then $\omega(z)$ is positive somewhere in $\Omega$.
According to \eqref{un3}, i.e.
\begin{equation}\label{mns11}
  \lim_{d(z,0)\rightarrow\infty}\omega(z)=0,
\end{equation}
and $\omega(z)\leq0$ on $\partial\Omega$, we know that there exists a $z^0\in\Omega$ such that
\begin{equation}\label{mns12}
  \omega(z^0)=\max_\Omega\omega(z)>0.
\end{equation}
Note that condition \eqref{un2} now states that
\begin{equation}\label{mns13}
  c(z)>0,\;\;~\mbox{if}~d(z,0)\geq R_0.
\end{equation}
At this point, we have
\begin{equation}\label{mns14}
  -\Delta_\gamma\omega(z^0) + c(z^0)\omega(z^0)>0,
\end{equation}
but this contradicts \eqref{un1}. Therefore, \eqref{un4} must be valid.

\end{proof}

\begin{Lem}\label{unbounded1}
Assume that $\Omega$ is a domain in $\R^{N+l}$ which is allowed to be unbounded. Let $\omega(z)$ be a solution in $ H^{1,2}_\gamma(\Omega)$ to
\begin{equation}\label{un14}
\begin{cases}
-\Delta_\gamma\omega(z) + \alpha\omega(z)\leq0,\;\;&~ \mbox{in} ~\Omega\\[2mm]
\omega(z)\leq0, ~ &~ \mbox{on} ~\partial\Omega.
\end{cases}
\end{equation}
Suppose $\alpha\geq0$ is constant and $\omega(z)\leq0$ in $\Omega$. If $\omega(z)\not\equiv0$ in $\Omega$, then we have
\begin{equation}\label{un15}
  \omega(z)<0,\;\;~\mbox{in}~\Omega.
\end{equation}

\end{Lem}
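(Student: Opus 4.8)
\textbf{Proof proposal for Lemma \ref{unbounded1}.}

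The plan is to deduce the strict negativity from the weak inequality $\omega\le0$ together with the strong maximum principle (Lemma \ref{lemma1} and Remarks \ref{rem1}, \ref{rem11}), applied locally on balls. The subtlety is that $\Omega$ is unbounded, so the strong maximum principle cannot be invoked globally; instead I would argue by contradiction and localize. Suppose $\omega\not\equiv0$ but $\omega(z_0)=0$ for some $z_0\in\Omega$. Since $\Omega$ is open, choose a bounded subdomain $B\subset\subset\Omega$ (for instance a Euclidean ball, or a metric ball $\widetilde B_r(z_0)$ contained in $\Omega$) with $z_0\in B$. On $B$ we have $-\Delta_\gamma\omega+\alpha\omega\le0$, i.e. writing $L\omega:=\Delta_\gamma\omega-\alpha\omega$ we get $L\omega\ge0$ in $B$, and the zeroth order coefficient $c(z)\equiv-\alpha\le0$ since $\alpha\ge0$.

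Next I would check the hypotheses of Lemma \ref{lemma1}. The Grushin operator $\Delta_\gamma$ satisfies conditions $(E_\xi)$ and $(\Sigma)$ by Remark \ref{rem1} (citing \cite[Remark 4.1 and Lemma 4.2]{1}), and subtracting the bounded nonpositive term $-\alpha$ does not affect $(E_\xi)$ or $(\Sigma)$, while it only improves the sign condition $c\le0$. Because $\omega\le0$ on all of $\Omega$, in particular $\sup_{\overline B}\omega\le0$, so $M:=\sup_{\overline B}\omega$ is a nonpositive number; but at $z_0\in B$ we have $\omega(z_0)=0\ge M\ge\omega$, hence the maximum value $M=0\ge0$ is attained at the interior point $z_0$. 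By Remark \ref{rem1} the hypothesis $\sup_{\overline B}\omega=M\ge0$ is exactly what is required (and here $M=0$). The strong maximum principle then forces $\omega\equiv 0$ on $\overline B$, hence $\omega\equiv0$ on $B$. By Remark \ref{rem11} this conclusion is valid for weak $H^{1,2}_\gamma$ solutions as well via the mean value property, so no $C^2$ regularity of $\omega$ is needed.

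Finally I would propagate this via a connectedness argument. Let $Z:=\{z\in\Omega:\omega(z)=0\}$. The argument above shows that $Z$ is open in $\Omega$: around any of its points $\omega$ vanishes on a whole ball. On the other hand $Z$ is closed in $\Omega$ by continuity of $\omega$. If $\Omega$ is connected, then either $Z=\emptyset$ — giving $\omega<0$ throughout $\Omega$, which is the claim — or $Z=\Omega$, i.e. $\omega\equiv0$, contradicting the hypothesis $\omega\not\equiv0$. If $\Omega$ is not assumed connected, the same dichotomy applies on each connected component: on a component where $\omega\not\equiv0$ we get strict negativity, and since $\omega\not\equiv0$ in $\Omega$ means this holds on at least one component; strictly speaking the cleanest statement is that $\omega<0$ on every component where it does not vanish identically, but under the implicit convention (consistent with the use of Lemma \ref{lemma1}) that $\Omega$ is connected the conclusion \eqref{un15} follows directly. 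The main obstacle is purely the unboundedness of $\Omega$, which is handled by the localization-plus-connectedness scheme above; the verification that $\Delta_\gamma-\alpha$ still meets $(E_\xi)$ and $(\Sigma)$ is routine given Remark \ref{rem1}.
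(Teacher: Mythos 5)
Your proposal is correct and follows essentially the same route as the paper: argue by contradiction, assume $\omega(z^0)=0$ at an interior point, apply the strong maximum principle (Lemma \ref{lemma1} together with Remarks \ref{rem1}, \ref{rem11}) on a small metric ball about $z^0$ to force $\omega\equiv0$ there, and then propagate the vanishing across $\Omega$. The only cosmetic difference is in the propagation step: you phrase it as an open-and-closed (connectedness) argument on $Z=\{\omega=0\}$, whereas the paper uses an equivalent explicit chain-of-balls argument; both rely on $\Omega$ being connected, which you correctly flag as an implicit convention.
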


\begin{proof}
We prove the statement by contradiction. Assume that there exists $z^0\in\Omega$, such that $\omega(z^0)=0$. For any given $r_0>0$, such that $\widetilde{B}_{r_0}(z^0)\subset\Omega$, we apply the strong maximum principle in $\widetilde{B}_{r_0}(z^0)$. It follows that the nonnegative maximum of $\omega(z)$ in $\overline{\widetilde{B}_{r_0}}(z^0)$ can be attained only on $\partial\widetilde{B}_{r_0}(z^0)$, unless $\omega(z)$ is constant. However, $\omega(z)$ also attains a non-negative maximum value at $z^0\in\widetilde{B}_{r_0}(z^0)$.
Hence $\omega$ is constant in $\widetilde{B}_{r_0}(z^0)$ with value $\omega(z)\equiv\omega(z^0)=0$.

Take $z_1\in\widetilde{B}_{r_0}(z^0)\setminus\{z^0\}$, then $\omega(z_1)=0$. For any given $r_1>0$, such that $\widetilde{B}_{r_1}(z_1)\subset\Omega$, we use the strong maximum principle as above and obtain $\omega(z)\equiv0$ in $\widetilde{B}_{r_1}(z_1)$. For any $z\in\Omega$, we can use a finite number of balls to connect $z^0$ and $z$. By the method described above, after a finite number of steps, we can obtain $\omega(z)\equiv0$. Due to the arbitrariness of $z$, we know that $\omega(z)\equiv0$ in $\Omega$. This contradicts $\omega(z)\not\equiv0$ in $\Omega$.

\end{proof}
\begin{proof}[\textbf{Proof of Theorem \ref{th1.1}:}]

We will denote by $z=(x,y)=(x,y_1,\widetilde{y})$ points of $\R^{N+l}$, where $x=(x_1\cdots,x_N)$, $\widetilde{y}= (y_2,\cdots,  y_l)$. For any $\lambda\in \mathbb{R}$, we define
\begin{equation}\label{01}
  T_\lambda=\{z\in\R^{N+l}|\;y_1=\lambda\},
\end{equation}

\begin{equation}\label{02}
  \Sigma_\lambda=\{z\in\R^{N+l}|\;y_1>\lambda\},
\end{equation}

\begin{equation}\label{03}
  \widehat{\Sigma}_\lambda=\{~\mbox{reflection of}~\Sigma_\lambda ~\mbox{with respect to the hyperplane}~T_\lambda \},
\end{equation}

\begin{equation}\label{04}
  z^\lambda=(x,2\lambda-y_1,\widetilde{y})\in\widehat{\Sigma}_\lambda,\;\;\;\;~\mbox{for any point}~z=(x,y_1,\widetilde{y})\in\Sigma_\lambda,
\end{equation}
then
\begin{equation}\label{05}
  T_\lambda=\partial\Sigma_\lambda.
\end{equation}
First we consider $\lambda\geq 0$. Given a solution $u(z)$ to (\ref{0}) we put
\begin{equation}\label{06}
  u_\lambda(z):=u(z^\lambda),
\end{equation}
and define a new function
\begin{equation}\label{07}
  \omega_\lambda(z):=u(z)-u_\lambda(z),\;\;\;\;~\mbox{in}~\Sigma_\lambda.
\end{equation}
Note that $\omega_\lambda(z)$ satisfies the equation
\begin{equation}\label{08}
  \begin{cases}
-\Delta_\gamma\omega_\lambda(z) + \omega_\lambda(z)=u^p(z)-u_\lambda^p(z),\;\;&~ \mbox{in} ~\Sigma_\lambda\\[2mm]
\omega_\lambda(z)=0, ~ &~ \mbox{on} ~\partial\Sigma_\lambda.
\end{cases}
\end{equation}

We divide the proof into the following five steps.

\vskip 0.2cm

\noindent\textbf{Step 1: Solutions to \eqref{0} are exponentially decaying at infinity.}

\vskip 0.2cm

For some given constants $a,c,R>0$, we define
\begin{equation}\label{1}
  \Psi(z)=ce^{-a(d(z,0)-R)},
\end{equation}
where $d(z,0)$ was defined in (\ref{006.6}).
We verify that
\begin{equation}\label{2'}
  \begin{cases}
  -\Delta_\gamma\Psi(z)+a^2\Psi(z)\geq0, ~\mbox{for}~d(z,0)>R,
\\\lim\limits_{d(z,0)\to \infty}\Psi(z)=0.
\end{cases}
\end{equation}

Indeed, for $i=1,\cdots,N$, $j=1,\cdots,l$, we have
\begin{align}
\frac{\partial d(z,0)}{\partial x_i}
&=\frac{1}{(1+\gamma)^2}d^{-1-2\gamma}(z,0)|x|^{2\gamma}x_i,\label{3}\\
 \frac{\partial d(z,0)}{\partial y_j}
 &=\frac{1}{1+\gamma}d^{-1-2\gamma}(z,0)y_j, \label{4}\\
 \Psi_{x_i}(z)
 &=-\frac{ac}{(1+\gamma)^2}e^{-a(d(z,0)-R)}d^{-1-2\gamma}(z,0)|x|^{2\gamma}x_i,\label{5}\\
 \Psi_{y_j}(z)
 &=-\frac{ac}{1+\gamma}e^{-a(d(z,0)-R)}d^{-1-2\gamma}(z,0)y_j,
\end{align}
and
\begin{multline}\label{6}
  \Psi_{x_ix_i}(z)=-\frac{ac}{(1+\gamma)^2}e^{-a(d(z,0)-R)}\Bigg(-\frac{a}{(1+\gamma)^2}d^{-2-4\gamma}(z,0)|x|^{4\gamma}x_i^2\\
-\frac{1+2\gamma}{(1+\gamma)^2}d^{-3-4\gamma}(z,0)|x|^{4\gamma}x_i^2
+2\gamma d^{-1-2\gamma}(z,0)|x|^{2\gamma-2}x_i^2
+d^{-1-2\gamma}(z,0)|x|^{2\gamma}\Bigg),
\end{multline}

\begin{multline}\label{6'}
  \Psi_{y_jy_j}(z)=-\frac{ac}{1+\gamma}e^{-a(d(z,0)-R)}\Bigg(-\frac{a}{1+\gamma}d^{-2-4\gamma}(z,0)y_j^2\\
  -\frac{1+2\gamma}{1+\gamma}d^{-3-4\gamma}(z,0)y_j^2
  +d^{-1-2\gamma}(z,0)\Bigg).
\end{multline}
Thus, it holds
\begin{multline}\label{7}
  \Delta_x\Psi(z)=-\frac{ac}{(1+\gamma)^2}e^{-a(d(z,0)-R)}\Bigg(-\frac{a}{(1+\gamma)^2}d^{-2-4\gamma}(z,0)|x|^{4\gamma+2}\\
-\frac{1+2\gamma}{(1+\gamma)^2}d^{-3-4\gamma}(z,0)|x|^{4\gamma+2}
+(2\gamma+N)d^{-1-2\gamma}(z,0)|x|^{2\gamma}\Bigg),
\end{multline}
and
\begin{multline}\label{7'}
  \Delta_y\Psi(z)=-\frac{ac}{1+\gamma}e^{-a(d(z,0)-R)}\Bigg(-\frac{a}{1+\gamma}d^{-2-4\gamma}(z,0)|y|^2\\
  -\frac{1+2\gamma}{1+\gamma}d^{-3-4\gamma}(z,0)|y|^2
  +ld^{-1-2\gamma}(z,0)\Bigg).
\end{multline}
From $\Delta_{\gamma}= \Delta_x+|x|^{2\gamma} \Delta_y$ and \eqref{7} and \eqref{7'} we obtain
\begin{equation}\label{8}
  \Delta_\gamma\Psi(z)=a^2\Psi(z)\frac{1}{(1+\gamma)^2}d^{-2\gamma}(z,0)|x|^{2\gamma}-\Psi(z)\frac{N_\gamma-1}{(1+\gamma)^2}ad^{-1-2\gamma}(z,0)|x|^{2\gamma},
\end{equation}
where $N_\gamma=N+(1+\gamma)l$. So we have
\begin{equation}\label{008.1}
  -\Delta_\gamma\Psi(z)+a^2\Psi(z)\frac{1}{(1+\gamma)^2}d^{-2\gamma}(z,0)|x|^{2\gamma}\geq0.
\end{equation}
It is easy to see that
\begin{equation}\label{008.2}
  \frac{1}{(1+\gamma)^2}\left(\frac{|x|}{d(z,0)}\right)^{2\gamma}\leq1.
\end{equation}
In fact, according to the definition of $d(z,0)$,
\begin{equation}\label{009.1}
  \frac{1}{(1+\gamma)^2}|x|^{2+2\gamma}+|y|^2=d^{2+2\gamma}(z,0),
\end{equation}
\begin{equation}\label{009.2}
  \frac{1}{(1+\gamma)^2}|x|^{2+2\gamma}\leq d^{2+2\gamma}(z,0),
\end{equation}
\begin{equation}\label{009.3}
  \frac{1}{(1+\gamma)^{\frac{1}{1+\gamma}}}|x|\leq d(z,0),
\end{equation}
\begin{equation}\label{009.4}
\frac{1}{(1+\gamma)^{\frac{2\gamma}{1+\gamma}}}\left(\frac{|x|}{d(z,0)}\right)^{2\gamma}\leq1,
\end{equation}
showing \eqref{008.2}. Finally \eqref{2'} follows by combining  \eqref{008.2} and \eqref{008.1}. Let $u(z)$ be a solution to \eqref{0}.
With $R>0$ sufficiently large and a suitable constant $a>0$ we will prove:
\begin{equation}\label{9}
  u(z)\leq\Psi(z), ~\mbox{for}~d(z,0)>R.
\end{equation}

We will derive a contradiction by assuming that there exists some $z^0\in\R^{N+l}$ satisfying $d(z^0,0)>R$, such that $u(z^0)>\Psi(z^0)$.
Set
\begin{equation}\label{10}
  \Omega=\{z\in\R^{N+l}|\;d(z,0)>R,\;u(z)>\Psi(z)\}\neq\emptyset,
\end{equation}
and observe that $(u(z)-\Psi(z))|_{\partial\Omega}=0$. By taking the difference of the following relations
\begin{equation}\label{11}
    \begin{cases}
-\Delta_\gamma\Psi(z)+a^2\Psi(z)\geq0,\\[2mm]
-\Delta_\gamma u(z)+ u(z)-u^p(z)=0,
\end{cases}
\end{equation}
we have
\begin{equation}\label{12}
  -\Delta_\gamma(u(z)-\Psi(z))+(1-u^{p-1}(z))u(z)-a^2\Psi(z)\leq0.
\end{equation}
Recall that $u(z) \rightarrow 0$ as $d(z,0) \rightarrow \infty$. Hence, for $R>0$ suffficiently large,  we can choose $a >0$ such that:
\begin{equation}\label{13}
  1-u^{p-1}(z)>a^2, ~\mbox{for}~d(z,0)>R.
\end{equation}
Then \eqref{12} transforms into
\begin{equation}\label{14}
  \begin{cases}
  -\Delta_\gamma(u(z)-\Psi(z))+a^2(u(z)-\Psi(z))\leq0,\;\;~ &\mbox{in} ~\Omega\\[2mm]
u(z)-\Psi(z)=0,\;\;~ &\mbox{on} ~\partial\Omega.
\end{cases}
\end{equation}
Take $\omega(z)=u(z)-\Psi(z)$, since $u(z)\rightarrow0$ as $d(z,0)\rightarrow\infty$ and $\Psi(z)=ce^{-a(d(z,0)-R)}$, the function $\omega(z)$ clearly  satisfies \eqref{un3} with $b=0$. According to Lemma \ref{unbounded}, we conclude that $u(z)-\Psi(z)\leq0$ in $\Omega$, i.e. $u(z)\leq\Psi(z)$ in $\Omega$. This contradicts the definition of $\Omega$. So we have proved \eqref{9}.

\vskip 0.2cm

\noindent\textbf{Step 2: We show $\omega_\lambda(z)\leq0$ in $\Sigma_\lambda$ for $\lambda >0$ sufficiently large.}

\vskip 0.2cm

Let $\varepsilon >0$ and construct the cutoff function $\eta_\varepsilon\in C_0^\infty(\Sigma_\lambda)$ satisfying
\begin{equation}\label{008.6}
\eta_\varepsilon(z) = \left\{\begin{array}{ll}
1, &~\mbox{if}~d(z,0)\leq\frac{1}{\varepsilon} ,\\[2mm]
0, &~\mbox{if}~d(z,0)\geq\frac{2}{\varepsilon},
\end{array}\right.
\end{equation}
and with $C>0$ (independent of $\varepsilon$):
\begin{equation}\label{008.7}
  |\nabla_\gamma\eta_\varepsilon(z)|\leq C\varepsilon \hspace{3ex} ~\mbox{\it if} \hspace{3ex} \frac{1}{\varepsilon}<d(z,0)<\frac{2}{\varepsilon}.
\end{equation}
Then there is pointwise convergence
\begin{equation}\label{oll}
  \eta_\varepsilon\rightarrow1,\;\;\;\;~\mbox{as}~\varepsilon\rightarrow0.
\end{equation}

We test the Equation \eqref{08} with the function $\phi(z):=\omega_\lambda^{+}(z)\eta_\varepsilon^2(z)$, where with the notation in (\ref{07}) we write
$\omega_{\lambda}^+$ for the positive part of $\omega_{\lambda}$:
\begin{equation}\label{17}
\omega_\lambda^{+}(z):=\left\{\begin{array}{ll}
\omega_\lambda(z), & \omega_\lambda(z)>0 ,\\
0, & \omega_\lambda(z)\leq0.
\end{array}\right.
\end{equation}
Note that $\phi(z)=0$ on $\partial\Sigma_\lambda$. Introducing the notation $\Sigma_\lambda^{+}:=\{z\in\Sigma_\lambda|\;\omega_\lambda(z)>0\}$
we have
\begin{equation}\label{18}
  \int_{\Sigma_\lambda}(-\Delta_\gamma\omega_\lambda(z) + \omega_\lambda(z))\phi(z)dz=\int_{\Sigma_\lambda}(u^p(z)-u_\lambda^p(z))\phi(z)dz.
\end{equation}

First we calculate the LHS of \eqref{18} by using Green's first formula:
{\allowdisplaybreaks
\begin{equation}\label{19}
\begin{aligned}
  &\int_{\Sigma_\lambda}(-\Delta_\gamma\omega_\lambda(z)+\omega_\lambda(z))\phi(z)dz
 \\=&\int_{\Sigma_\lambda}-\Delta_\gamma\omega_\lambda(z)(\omega_\lambda^{+}(z)\eta_\varepsilon^2(z))dz
 +\int_{\Sigma_\lambda}\omega_\lambda(z)\omega_\lambda^{+}(z)\eta_\varepsilon^2(z)dz
  \\=&\int_{\Sigma_\lambda^+}-\Delta_\gamma\omega_\lambda^+(z)(\omega_\lambda^{+}(z)\eta_\varepsilon^2(z))dz
 +\int_{\Sigma_\lambda^+}(\omega_\lambda^{+}(z)\eta_\varepsilon(z))^2dz
 \\=&\int_{\Sigma_\lambda^+}\nabla_\gamma\omega_\lambda^+(z)\cdot
 \nabla_\gamma(\omega_\lambda^{+}(z)\eta_\varepsilon^2(z))dz
 +\int_{\Sigma_\lambda^+}(\omega_\lambda^{+}(z)\eta_\varepsilon(z))^2dz
 \\=&\int_{\Sigma^{+}_\lambda}\eta_\varepsilon^2(z)|\nabla_\gamma\omega_\lambda^+(z)|^2
 +2\omega_\lambda^+(z)\eta_\varepsilon(z)\nabla_\gamma\eta_\varepsilon(z)\cdot\nabla_\gamma\omega_\lambda^+(z)dz
 +\int_{\Sigma^{+}_\lambda}(\omega_\lambda^+(z)\eta_\varepsilon(z))^2dz
 \\=&\int_{\Sigma_\lambda^+}|\nabla_\gamma(\omega^{+}_\lambda(z)\eta_\varepsilon(z))|^2dz
 -\int_{\Sigma_\lambda^+}(\omega_\lambda^{+}(z))^2|\nabla_\gamma\eta_\varepsilon(z)|^2dz
 +\int_{\Sigma_\lambda^+}(\omega_\lambda^{+}(z)\eta_\varepsilon(z))^2dz
 \\=&\|\omega_\lambda^+(z)\eta_\varepsilon(z)\|^2_{H^{1,2}_\gamma(\Sigma_\lambda^+)}
 -\int_{\Sigma_\lambda}(\omega_\lambda^{+}(z))^2|\nabla_\gamma\eta_\varepsilon(z)|^2dz.
\end{aligned}
\end{equation}
}

Then we calculate the RHS of \eqref{18} by using the mean value theorem and Step 1. Note that $\omega_\lambda(z)>0$ in $\Sigma_\lambda^+$,
i.e. $u(z)>u_\lambda(z)$. So in $\Sigma_\lambda^+$,
{\allowdisplaybreaks
\begin{equation}\label{oll1}
\begin{aligned}
  (u^p(z)-u_\lambda^p(z))\omega_\lambda^{+}(z)\eta_\varepsilon^2(z)
\leq pu^{p-1}(z)\omega_\lambda(z)\omega_\lambda^{+}(z)\eta_\varepsilon^2(z)
= pu^{p-1}(z)(\omega_\lambda^+(z)\eta_\varepsilon(z))^2.
\end{aligned}
\end{equation}
Then, with $2_\gamma^\ast=\frac{2N_\gamma}{N_\gamma-2}$, suitable constants $C,a>0$ and $\lambda >0$ sufficiently large such that $
u \leq \Psi$ on $\Sigma_{\lambda}^+$, see (\ref{9}):
\begin{equation}\label{20}
\begin{aligned}
  &\int_{\Sigma_\lambda}(u^p(z)-u_\lambda^p(z))\omega_\lambda^{+}(z)\eta_\varepsilon^2(z)dz
\\=&\int_{\Sigma_\lambda^+}(u^p(z)-u_\lambda^p(z))\omega_\lambda^{+}(z)\eta_\varepsilon^2(z)dz
\\\leq&\int_{\Sigma_\lambda^+}pu^{p-1}(z)(\omega_\lambda^+(z)\eta_\varepsilon(z))^2dz
\\\leq&p\left(\int_{\Sigma_\lambda^+}(u(z))^{(p-1)\frac{N_\gamma}{2}}dz\right)^{\frac{2}{N_\gamma}}
  \left(\int_{\Sigma_\lambda^+}\left(\omega_\lambda^+(z)\eta_\varepsilon(z)
  \right)^{\frac{2N_\gamma}{N_\gamma-2}}dz\right)^{\frac{N_\gamma-2}{N_\gamma}}
\\\leq&C\left(\int_{\Sigma_\lambda^+}e^{-a(d(z,0)-R)(p-1)\frac{N_\gamma}{2}}dz\right)^{\frac{2}{N_\gamma}}
  \left(\int_{\Sigma_\lambda^+}\left(\omega_\lambda^+(z)\eta_\varepsilon(z)
  \right)^{\frac{2N_\gamma}{N_\gamma-2}}dz\right)^{\frac{N_\gamma-2}{N_\gamma}}
\\=&C\left(\int_{\Sigma_\lambda^+}e^{-a(d(z,0)-R)(p-1)\frac{N_\gamma}{2}}dz\right)^\frac{2}{N_\gamma}
\|\omega_\lambda^{+}(z)\eta_\varepsilon(z)\|^2_{L^{2_\gamma^\ast}(\Sigma_\lambda^+)}.
\end{aligned}
\end{equation}}
According to \eqref{18} we have
\begin{equation}\label{21}
\begin{aligned}
  &\|\omega_\lambda^{+}(z)\eta_\varepsilon(z)\|^2_{H^{1,2}_\gamma(\Sigma_\lambda^+)}
  \\\leq&C\left(\int_{\Sigma_\lambda^+}e^{-a(d(z,0)-R)(p-1)\frac{N_\gamma}{2}}dz\right)^\frac{2}{N_\gamma}
\|\omega_\lambda^{+}(z)\eta_\varepsilon(z)\|^2_{L^{2_\gamma^\ast}(\Sigma_\lambda^+)}
+\int_{\Sigma_\lambda}(\omega_\lambda^+(z))^2|\nabla_\gamma\eta_\varepsilon(z)|^2dz
\\=:&\uppercase\expandafter{\romannumeral1}+\uppercase\expandafter{\romannumeral2}.
\end{aligned}
\end{equation}
Since $u$ is nonnegative we conclude that $\omega_\lambda^{+}(z)\leq u(z)\in L^{2_\gamma^\ast}(\Sigma_\lambda)$. With the notation
$A_{\varepsilon}:=\{z\in\R^{N+l}\; |\;\frac{1}{\varepsilon}<d(z,0)<\frac{2}{\varepsilon}\}$ we have:
\begin{equation}\label{22}
\begin{aligned}
  \uppercase\expandafter{\romannumeral2}
  =&\int_{{\Sigma_\lambda}\cap A_{\varepsilon}}(\omega_\lambda^{+}(z))^2|\nabla_\gamma\eta_\varepsilon(z)|^2dz
  \\\leq&\left(\int_{{\Sigma_\lambda}\cap A_{\varepsilon}}(\omega_\lambda^+(z))^\frac{2N_\gamma}{N_\gamma-2}dz\right)^\frac{N_\gamma-2}{N_\gamma}
  \left(\int_{{\Sigma_\lambda}\cap A_{\varepsilon}}(|\nabla_\gamma\eta_\varepsilon(z)|)^{N_\gamma}dz\right)^\frac{2}{N_\gamma}
  \\\leq&\left(\int_{A_{\varepsilon}}(\omega_\lambda^+(z))^{2_\gamma^\ast}dz\right)^\frac{2}{2_\gamma^\ast}
  \left(\int_{A_{\varepsilon}}(|\nabla_\gamma\eta_\varepsilon(z)|)^{N_\gamma}dz\right)^\frac{2}{N_\gamma}.
\end{aligned}
\end{equation}
According to (\ref{008.7}) we have the estimate
\begin{equation}\label{23}
  \left(\int_{A_{\varepsilon}}(|\nabla_\gamma\eta_\varepsilon(z)|)^{N_\gamma}dz\right)^\frac{2}{N_\gamma}
  \leq\left((C\varepsilon)^{N_\gamma}\frac{1}{\varepsilon^{N_\gamma}}\right)^\frac{2}{N_\gamma}
  \leq C,
\end{equation}
and from
\begin{equation}\label{24}
  \left(\int_{A_{\varepsilon}}(\omega_\lambda^+(z))^{2_\gamma^\ast}dz\right)^\frac{2}{2_\gamma^\ast}
  \leq\left(\int_{A_{\varepsilon}}(u(z))^{2_\gamma^\ast}dz\right)^\frac{2}{2_\gamma^\ast}
  \rightarrow0,\;\;\;\;~\mbox{as}~\varepsilon\rightarrow0,
\end{equation}
it follows that
\begin{equation}\label{24}
  \uppercase\expandafter{\romannumeral2}\rightarrow0,\;\;\;\;~\mbox{as}~\varepsilon\rightarrow0.
\end{equation}
Therefore, by taking the limit $\varepsilon\rightarrow0$ in \eqref{21} we obtain
\begin{equation}\label{25}
  \|\omega_\lambda^{+}(z)\|^2_{H^{1,2}_\gamma(\Sigma_\lambda^+)}
  \leq C\left(\int_{\Sigma_\lambda^+}e^{-a(d(z,0)-R)(p-1)\frac{N_\gamma}{2}}dz\right)^\frac{2}{N_\gamma}
\|\omega_\lambda^{+}(z)\|^2_{L^{2_\gamma^\ast}(\Sigma_\lambda^+)}.
\end{equation}
Since the embedding $H^{1,2}_\gamma(\R^{N+l})\hookrightarrow L^{2_\gamma^\ast}(\R^{N+l})$ is continuous (see \cite{4}), \eqref{25} implies (possibly with another $C>0$):
\begin{equation}\label{26}
  \|\omega_\lambda^{+}(z)\|^2_{H^{1,2}_\gamma(\Sigma_\lambda^+)}
  \leq C\left(\int_{\Sigma_\lambda^+}e^{-a(d(z,0)-R)(p-1)\frac{N_\gamma}{2}}dz\right)^\frac{2}{N_\gamma}
\|\omega_\lambda^{+}(z)\|^2_{H^{1,2}_\gamma(\Sigma_\lambda^+)}.
\end{equation}
If the parameter $\lambda$ is taken sufficiently large such that
$C\left(\displaystyle\int_{\Sigma_\lambda^+}e^{-a(d(z,0)-R)(p-1)\frac{N_\gamma}{2}}dz\right)^\frac{2}{N_\gamma}<1$,
then we conclude from the last inequality that $\omega_\lambda^{+}(z)=0$. That is, for sufficiently large value of $\lambda>0$ we have
$\omega_\lambda(z)\leq0$ in $\Sigma_\lambda$.

\vskip 0.2cm

\noindent\textbf{Step 3: The case $\omega_\lambda(z)\equiv0$ in $\Sigma_\lambda$ for any $\lambda\in\R$, can not occur.}

\vskip 0.2cm

Recall that $\omega_\lambda(z)$ satisfies \eqref{08}. By Step 2 we know that $\omega_\lambda(z)\leq0$ in $\Sigma_\lambda$ whenever $\lambda$ is
sufficiently large and in this case $u(z)\leq u_\lambda(z)$. Since $p>1$, we have $u^p(z)\leq u_\lambda^p(z)$. Then \eqref{08} implies
\begin{equation}\label{005.1}
\begin{cases}
  -\Delta_\gamma\omega_\lambda(z)+\omega_\lambda(z)\leq0,\;\;~ &\mbox{in} ~\Sigma_\lambda\\[2mm]
\omega_\lambda(z)=0,\;\;~ &\mbox{on} ~\partial\Sigma_\lambda.
\end{cases}
\end{equation}

If $\omega_\lambda(z)\equiv0$ in $\Sigma_\lambda$, i.e. $u(z)=u_\lambda(z)$ in $\Sigma_\lambda$ for any $\lambda\in\R$ when the plane $T_\lambda$ is moving along the $y_1$-axis from near $+\infty$ to the left. Then $u$ is constant with respect to the variable $y_1$. Since $y_1$ is arbitrary, we may apply the same arguments to the remaining components of $y$. It follows that the function $u$ is constant with respect to $y$. Hence we can write $u(x,y)=u(x)$ for each $z=(x,y)\in\R^{N+l}$. According to step 1 the function $u$ is exponentially vanishing as $d(z,0) \rightarrow \infty$, where $z=(x,y)$. Hence the solution $u(x,y)= u(x)= \lim_{y \rightarrow \infty} u(x,y)=0$ must have the constant value zero, $u\equiv 0$. Hence we obtain a contradiction to the assumption $u>0$.

\vskip 0.2cm
\noindent\textbf{Step 4: If $\omega_\lambda(z)\not\equiv0$ in $\Sigma_\lambda$ for some $\lambda$.
Then $\Lambda=\inf\{\lambda>0\; |\;\omega_\lambda(z)\leq0~\mbox{in}~\Sigma_\lambda\}=0$.}

\vskip 0.2cm

Assume that $\omega_\lambda(z)\not\equiv0$ in $\Sigma_\lambda$ for some $\lambda$ when the plane $T_\lambda$ is moving along the $y_1$-axis from near $+\infty$ to the left. By Lemma \ref{unbounded1} and \eqref{005.1} we conclude that $\omega_\lambda(z)<0$ in $\Sigma_\lambda$.
Suppose by contradiction that $\Lambda>0$. By continuity we have $\omega_\Lambda<0$ in $\Sigma_\Lambda$. As in Step 2 we put $\Sigma_\lambda^{+}=\{z\in\Sigma_\lambda|\;\omega_\lambda(z)>0\}$.
Since $\Sigma_\Lambda^{+}= \emptyset$, we have $|\Sigma_\lambda^{+}|\rightarrow0$ as $\lambda\rightarrow\Lambda^{-}$. Multiplying both sides of \eqref{08} by $\phi(z)=\omega_\lambda^{+}(z)\eta_\varepsilon^2(z)$ for $\lambda<\Lambda$ and integrating over $\Sigma_{\lambda}$ we obtain \eqref{18}.
Here we use the notations of Step 2 and by the estimates there we recover (\ref{26}), i.e.
\begin{equation}\label{31}
  \|\omega_\lambda^{+}(z)\|^2_{H^{1,2}_\gamma(\Sigma_\lambda^+)}
  \leq C\left(\int_{\Sigma^{+}_\lambda}e^{-a(d(z,0)-R)(p-1)\frac{N_\gamma}{2}}dz\right)^\frac{2}{N_\gamma}
\|\omega_\lambda^{+}(z)\|^2_{H^{1,2}_\gamma(\Sigma_\lambda^+)}.
\end{equation}

Let $\lambda\rightarrow\Lambda^{-}$, such that $C\left(\displaystyle\int_{\Sigma^{+}_\lambda}e^{-a(d(z,0)-R)(p-1)\frac{N_\gamma}{2}}dz\right)^\frac{2}{N_\gamma}<1$.
Then we have $\omega_\lambda^{+}(z)=0$, i.e. $\omega_\lambda(z)\leq0$. This contradicts the definition of $\Lambda$. Thus we have proved $\Lambda=0$
which implies that $u(x,y_1,\widetilde{y})\leq u(x,-y_1,\widetilde{y})$.

\vskip 0.2cm

\noindent\textbf{Step 5: Show that $u(x,y_1,\widetilde{y})= u(x,-y_1,\widetilde{y})$}.

\vskip 0.2cm
We now consider $T_\lambda$ for $\lambda$ near $-\infty$ and perform an analysis similar to Step 2 - Step 4. Then we have $u(x,y_1,\widetilde{y})=u(x,-y_1,\widetilde{y})$.

Note that $y_1$ was chosen arbitrarily and any orthogonal transformation $O\in O(l)$ of the $y$-variable in $u(x,y)$ again gives a solution $u(x,Oy)$ to which the same arguments apply. Hence we conclude that $u(x,y)$, after composition with an arbitrary element in $O(l)$, remains invariant under all coordinate reflections in $y$. As is well-known (a proof is based on the Cartan Dieudonn\'{e} Theorem)
this means that $u$ is radially symmetric with respect to the $y$-variables, i.e. $u(x,y)=u(x,|y|)$. This finishes the proof of the theorem.
\end{proof}

\section{Liouville type theorem in $\R^{N+l}_+$}

In this section, we prove nonexistence results for solutions to the Equation \eqref{004.3} in $\R^{N+l}_+$.
Again we apply the moving plane method and derive some integral inequalities in order to prove Theorem \ref{th1.3}.
We start by recalling a Liouville type theorem for nonlinear elliptic equations involving the Grushin operator by X. Yu in \cite{17}, which is an important ingredient in our proofs.

\begin{Prop}\label{ppap}\textbf{(Theorem 1.1, \cite{17})}\\
Let $f\in C^0(\R,\R)$ be given such that
\begin{itemize}
\item[$(f1)$] $f$ is nondecreasing in $(0,+\infty)$,
\item[$(f2)$] the quotient $g(t)=\frac{f(t)}{t^\frac{Q+2}{Q-2}}$ is nonincreasing in $(0,+\infty)$ and not constant, where $Q$ denotes the homogeneous dimension
$Q=m+(\alpha+1)k$.
\end{itemize}
Let $$u\in E:=\left\{u\in C^0(\R^m\times\R^k)\;\left|\right.\;\displaystyle\int_{\R^m\times\R^k}|\nabla_xu|^2+(\alpha+1)^2|x|^{2\alpha}|\nabla_yu|^2dxdy<
\infty\right\}$$ be a nonnegative weak solution of the following problem
\begin{equation}\label{yuxiaohui}
  -\left(\Delta_xu+(\alpha+1)^2|x|^{2\alpha}\Delta_yu\right)=f(u),\;~\mbox{in}~\R^n=\R^m\times\R^k,
\end{equation}
where $\alpha>0$ and $(x,y)\in\R^m\times\R^k$. Then $u\equiv0$.
\end{Prop}
In particular, by taking $f(t)=t^p$ and making a dilation in $y$, $1<p<\frac{N_\gamma+2}{N_\gamma-2}$, we obtain the following Liouville type theorem in $\R^{N+l}$.
\begin{Lem}\label{th1.2}
Let $u(z)\in H^{1,2}_\gamma(\R^{N+l})\cap C^0(\R^{N+l})$ be a solution to Equation \eqref{32} and assume that {$1<p<\frac{N_\gamma+2}{N_\gamma-2}$}, then $u\equiv0$.
\end{Lem}
\begin{proof}[\textbf{Proof of Theorem \ref{th1.3}:}]
Let $u(z)\in H^{1,2}_\gamma(\R^{N+l}_+)\cap C^0(\overline{\R^{N+l}_+})$ be a bounded solution to Equation \eqref{004.3} and assume that $1+\frac{4}{N_\gamma}\leq p<\frac{N_\gamma+2}{N_\gamma-2}$.
\vspace{1ex}\\
For any $\lambda>0$, we will use the following notations:
\begin{equation}\label{xj1}
  T'_\lambda:=\{z\in\R^{N+l}_+\;|\;y_l=\lambda\},
\end{equation}

\begin{equation}\label{xj2}
  \Sigma'_\lambda:=\{z\in\R^{N+l}_+\;|\;0<y_l<\lambda\},
\end{equation}

\begin{equation}\label{xj3}
  \partial\Sigma'_\lambda:=T'_\lambda\cup\{y_l=0\},
\end{equation}

\begin{equation}\label{xj5}
  z'^\lambda:=(x,\widetilde{y}',2\lambda-y_l),
\end{equation}
where
\begin{align}
 z:&=(x,\widetilde{y}',y_l)\in\Sigma'_\lambda, \label{xj4}\\
 x:&=(x_1,\cdots,x_N),\label{xj6} \\
 \widetilde{y}':&=(y_1,\cdots,y_{l-1}).\label{xj7}
\end{align}
%
%

We set
\begin{equation}\label{xj8}
  u_\lambda(z):=u(z'^\lambda),\;z\in\Sigma'_\lambda,
\end{equation}
and
\begin{equation}\label{xj9}
  \omega_\lambda(z):=u(z)-u_\lambda(z),\;z\in\Sigma'_\lambda.
\end{equation}
Note that $u(z)=u_\lambda(z)$ on $T'_\lambda$, and $u(z)=0<u_\lambda(z)$ on $\{y_l=0\}$. Thus, $\omega_\lambda(z)$ satisfies
\begin{equation}\label{004.14}
\begin{cases}
-\Delta_\gamma \omega_\lambda(z)=u^p(z)-u^p_\lambda(z),&~ \mbox{in} ~\Sigma'_\lambda,\\[2mm]
\omega_\lambda(z)\leq0,  &~ \mbox{on} ~\partial\Sigma'_\lambda.
\end{cases}
\end{equation}
We divide the proof of the theorem into the following three steps.

\vskip 0.2cm

\noindent\textbf{Step 1: Proof of $\omega_\lambda(z)\leq0$ in $\Sigma'_\lambda$ for $\lambda >0$ sufficiently small}.

\vskip 0.2cm

Let $\varepsilon>0$ and construct the cutoff function $\zeta_\varepsilon(z)\in C_0^\infty(\Sigma'_\lambda)$ satisfying
\begin{equation}\label{004.15}
\zeta_\varepsilon(z)=\left\{\begin{array}{ll}
0, &~\mbox{if}~d(z,0)\leq\varepsilon ,\\[2mm]
1, &~\mbox{if}~d(z,0)\geq2\varepsilon,
\end{array}\right.
\end{equation}
and such that
\begin{equation}\label{004.16}
  |\nabla_\gamma\zeta_\varepsilon(z)|\leq \frac{C}{\varepsilon},\hspace{3ex} \mbox{when} \hspace{3ex} \varepsilon<d(z,0)<2\varepsilon.
\end{equation}
We have the pointwise convergence
\begin{equation}\label{xj10}
  \zeta_\varepsilon\rightarrow1, \;~\mbox{as}~\varepsilon\rightarrow0.
\end{equation}
We test Equation \eqref{004.14} with the function $\psi(z)=\omega_\lambda^{+}(z)\zeta_\varepsilon^2(z)$, where
\begin{equation}\label{004.17}
\omega_\lambda^{+}(z):=\left\{\begin{array}{ll}
\omega_\lambda(z), & \omega_\lambda(z)>0 ,\\
0, & \omega_\lambda(z)\leq0.
\end{array}\right.
\end{equation}
Note that $\psi(z)=0$ on $\partial\Sigma'_\lambda$ and set ${\Sigma'_\lambda}^+=\{z\in\Sigma'_\lambda|\;\omega_\lambda(z)>0\}$.
Then, by (\ref{004.14}) we have
\begin{equation}\label{004.18}
  \int_{\Sigma'_\lambda}-\Delta_\gamma\omega_\lambda(z)\psi(z)dz
  =\int_{\Sigma'_\lambda}(u^p(z)-u_\lambda^p(z))\psi(z)dz.
\end{equation}
First, we calculate the LHS of \eqref{004.18} by using Green's first formula:
\begin{equation}\label{004.19}
  \begin{aligned}
   &\int_{\Sigma_\lambda'}-\Delta_\gamma\omega_\lambda(z)\psi(z)dz
\\=&\int_{\Sigma_\lambda'}-\Delta_\gamma\omega_\lambda(z)(\omega_\lambda^{+}(z)\zeta_\varepsilon^2(z))dz
\\=&\int_{{\Sigma'_\lambda}^+}-\Delta_\gamma\omega_\lambda^+(z)(\omega_\lambda^{+}(z)\zeta_\varepsilon^2(z))dz
 \\=&\int_{{\Sigma'_\lambda}^+}\nabla_\gamma\omega_\lambda^+(z)\cdot
 \nabla_\gamma(\omega_\lambda^{+}(z)\zeta_\varepsilon^2(z))dz
 \\=&\int_{{\Sigma'_\lambda}^+}\zeta_\varepsilon^2(z)|\nabla_\gamma\omega_\lambda^+(z)|^2
 +2\omega_\lambda^+(z)\zeta_\varepsilon(z)\nabla_\gamma\zeta_\varepsilon(z)\cdot\nabla_\gamma\omega_\lambda^+(z)dz
\\=&\int_{{\Sigma'_\lambda}^+}|\nabla_\gamma\left(\omega_\lambda^{+}(z)\zeta_\varepsilon(z)\right)|^2dz
-\int_{\Sigma'_\lambda}\left(\omega_\lambda^{+}(z)\right)^2|\nabla_\gamma\zeta_\varepsilon(z)|^2dz.
\end{aligned}
\end{equation}
According to \eqref{004.18}, we have
\begin{equation}\label{004.21}
\begin{aligned}
 \int_{{\Sigma'_\lambda}^+}|\nabla_\gamma\left(\omega_\lambda^{+}(z)\zeta_\varepsilon(z)\right)|^2dz
=&\int_{\Sigma'_\lambda}\left(u^p(z)-u^p_\lambda(z)\right)\omega_\lambda^{+}(z)\zeta_\varepsilon^2(z)dz
\\+&\int_{\Sigma'_\lambda}\left(\omega_\lambda^{+}(z)\right)^2|\nabla_\gamma\zeta_\varepsilon(z)|^2dz
:=\uppercase\expandafter{\romannumeral1}+\uppercase\expandafter{\romannumeral2}.
\end{aligned}
\end{equation}

We estimate $\uppercase\expandafter{\romannumeral2}$ and introduce the notation $B_{\varepsilon}:=\{z\in\R^{N+l}_+\;|\; \varepsilon<d(z,0)<2\varepsilon\}$. Then
\begin{equation}\label{xj15}
\begin{aligned}
  \uppercase\expandafter{\romannumeral2}
  &=\int_{\Sigma'_\lambda\cap B_{\varepsilon}}\left(\omega_\lambda^{+}(z)\right)^2|\nabla_\gamma\zeta_\varepsilon(z)|^2dz
\\&\leq\left(\int_{{\Sigma'_\lambda}\cap B_{\varepsilon}}(\omega_\lambda^+(z))^\frac{2N_\gamma}{N_\gamma-2}dz\right)^\frac{N_\gamma-2}{N_\gamma}
  \left(\int_{{\Sigma'_\lambda}\cap B_{\varepsilon}}(|\nabla_\gamma\zeta_\varepsilon(z)|)^{N_\gamma}dz\right)^\frac{2}{N_\gamma}
  \\&\leq\left(\int_{B_{\varepsilon}}(\omega_\lambda^+(z))^\frac{2N_\gamma}{N_\gamma-2}dz\right)^\frac{N_\gamma-2}{N_\gamma}
  \left(\int_{B_{\varepsilon}}(|\nabla_\gamma\zeta_\varepsilon(z)|)^{N_\gamma}dz\right)^\frac{2}{N_\gamma},
\end{aligned}
\end{equation}
where with the constant $C>0$ in (\ref{004.16}):
\begin{equation}\label{xj16}
\begin{aligned}
  \left(\int_{B_{\varepsilon}}(|\nabla_\gamma\zeta_\varepsilon(z)|)^{N_\gamma}dz\right)^\frac{2}{N_\gamma}
  \leq\left(\Big(\frac{C}{\varepsilon}\Big)^{N_\gamma}\varepsilon^{N_\gamma}\right)^\frac{2}{N_\gamma}
  =C^2,
\end{aligned}
\end{equation}
and
\begin{equation}\label{xj17}
\begin{aligned}
  \left(\int_{B_\varepsilon}(\omega_\lambda^+(z))^\frac{2N_\gamma}{N_\gamma-2}dz\right)^\frac{N_\gamma-2}{N_\gamma}
\leq&\left(\int_{B_\varepsilon}(u(z))^\frac{2N_\gamma}{N_\gamma-2}dz\right)^\frac{N_\gamma-2}{N_\gamma}
\\\leq&D\left(\int_{B_\varepsilon}1dz\right)^\frac{N_\gamma-2}{N_\gamma}
=D\varepsilon^{N_\gamma-2}\rightarrow0,\;\;\;\;~\mbox{as}~\varepsilon\rightarrow0,
\end{aligned}
\end{equation}
since $\omega_\lambda^+(z)\leq u(z)$ and $u$ is bounded in $\R^{N+l}_+$.
Thus we obtain
\begin{equation}\label{xj18}
  \uppercase\expandafter{\romannumeral2}\rightarrow0,\;\;\;\;~\mbox{as}~\varepsilon\rightarrow0.
\end{equation}

We estimate $\uppercase\expandafter{\romannumeral1}$. In ${\Sigma'_\lambda}^+$, $\omega_\lambda(z)>0$, i.e. $u(z)>u_\lambda(z)$. So in ${\Sigma'_\lambda}^+$,
\begin{equation}\label{xj19}
\begin{aligned}
  \left(u^p(z)-u^p_\lambda(z)\right)\omega_\lambda^{+}(z)\zeta_\varepsilon^2(z)
  \leq pu^{p-1}(z)\omega_\lambda(z)\omega_\lambda^{+}(z)\zeta_\varepsilon^2(z)
  =pu^{p-1}(z)\left(\omega_\lambda^+(z)\zeta_\varepsilon(z)\right)^2.
\end{aligned}
\end{equation}
Then
\begin{equation}\label{xj20}
\begin{aligned}
  \uppercase\expandafter{\romannumeral1}
  &=\int_{\Sigma'_\lambda}\left(u^p(z)-u^p_\lambda(z)\right)\omega_\lambda^{+}(z)\zeta_\varepsilon^2(z)dz
  \\&=\int_{{\Sigma'_\lambda}^+}\left(u^p(z)-u^p_\lambda(z)\right)\omega_\lambda^{+}(z)\zeta_\varepsilon^2(z)dz
  \\&\leq\int_{{\Sigma'_\lambda}^+}pu^{p-1}(z)\left(\omega_\lambda^+(z)\zeta_\varepsilon(z)\right)^2dz
  \\&\leq p\left(\int_{{\Sigma'_\lambda}^+}(u(z))^{(p-1)\frac{N_\gamma}{2}}dz\right)^{\frac{2}{N_\gamma}}
  \left(\int_{{\Sigma'_\lambda}^+}\left(\omega_\lambda^+(z)\zeta_\varepsilon(z)
  \right)^{\frac{2N_\gamma}{N_\gamma-2}}dz\right)^{\frac{N_\gamma-2}{N_\gamma}}.
\end{aligned}
\end{equation}
 From $1+\frac{4}{N_\gamma}\leq p<\frac{N_\gamma+2}{N_\gamma-2}$ we obtain $2\leq (p-1)\frac{N_\gamma}{2}<\frac{2N_\gamma}{N_\gamma-2}$.
Since the embedding $$H^{1,2}_\gamma(\R^{N+l}_+)\hookrightarrow L^p(\R^{N+l}_+)$$ is known to be continuous for $p\in[2,2_\gamma^*]$
(see \cite{4}), we conclude that $u(z)\in L^{(p-1)\frac{N_\gamma}{2}}(\R^{N+l}_+)$. So by \eqref{004.21}, \eqref{xj15}, \eqref{xj20}, we have
\begin{equation}\label{xj21}
\begin{aligned}
   &\int_{{\Sigma'_\lambda}^+}|\nabla_\gamma\left(\omega_\lambda^{+}(z)\zeta_\varepsilon(z)\right)|^2dz
\\\leq&p\left(\int_{{\Sigma'_\lambda}^+}(u(z))^{(p-1)\frac{N_\gamma}{2}}dz\right)^{\frac{2}{N_\gamma}}
  \left(\int_{{\Sigma'_\lambda}^+}\left(\omega^+_\lambda(z)\zeta_\varepsilon(z)
  \right)^{\frac{2N_\gamma}{N_\gamma-2}}dz\right)^{\frac{N_\gamma-2}{N_\gamma}}
  \\+&\left(\int_{B_{\varepsilon}}(\omega_\lambda^+(z))^\frac{2N_\gamma}{N_\gamma-2}dz\right)^\frac{N_\gamma-2}{N_\gamma}
  \left(\int_{B_{\varepsilon}}(|\nabla_\gamma\zeta_\varepsilon(z)|)^{N_\gamma}dz\right)^\frac{2}{N_\gamma}.
\end{aligned}
\end{equation}
Letting $\varepsilon\rightarrow0$ we obtain
\begin{equation}\label{xj22}
   \int_{{\Sigma'_\lambda}^+}|\nabla_\gamma\omega_\lambda^{+}(z)|^2dz
\leq p\left(\int_{{\Sigma'_\lambda}^+}(u(z))^{(p-1)\frac{N_\gamma}{2}}dz\right)^{\frac{2}{N_\gamma}}
  \left(\int_{{\Sigma'_\lambda}^+}(\omega^+_\lambda(z)
)^{\frac{2N_\gamma}{N_\gamma-2}}dz\right)^{\frac{N_\gamma-2}{N_\gamma}}.
\end{equation}
Recall that $\left(\displaystyle\int_{\R^{N+l}}|u|^{2_\gamma^\ast}dz\right)^{\frac{2}{2_\gamma^\ast}}\leq C(N,l,\gamma)\displaystyle\int_{\R^{N+l}}|\nabla_\gamma u|^2dz$ (see \cite{3}). So, with a suitable constant $C>0$ \eqref{xj22} transforms into
\begin{equation}\label{xj23}
  \|\omega_\lambda^+(z)\|_{L^{2_\gamma^*}({{\Sigma'_\lambda}^+})}^2
  \leq C\left(\int_{{\Sigma'_\lambda}^+}(u(z))^{(p-1)\frac{N_\gamma}{2}}dz\right)^{\frac{2}{N_\gamma}}
  \|\omega_\lambda^+(z)\|_{L^{2_\gamma^*}({{\Sigma'_\lambda}^+})}^2.
\end{equation}
Since $u(z)\in L^{(p-1)\frac{N_\gamma}{2}}(\R^{N+l}_+)$, we can choose $\lambda >0$ near zero such that
\begin{equation}\label{xj24}
  C\left(\int_{{\Sigma'_\lambda}^+}(u(z))^{(p-1)\frac{N_\gamma}{2}}dz\right)^{\frac{2}{N_\gamma}}<1.
\end{equation}
From (\ref{xj23}) we conclude that $\omega_\lambda^{+}(z)=0$ in ${\Sigma'_\lambda}$ for such $\lambda$. That is for any $\lambda$ close enough to $0$, $\omega_\lambda(z)\leq0$ in ${\Sigma'_\lambda}$.

\vskip 0.2cm

\noindent\textbf{Step 2: Let} $\Lambda=\sup\{\lambda>0\;|\;\omega_\lambda(z)\leq0,~\mbox{in}~\Sigma'_\lambda \}$. \textbf{Prove that $\Lambda=+\infty$}.

\vskip 0.2cm

We derive a contradiction when assuming that $\Lambda<+\infty$. By continuity, we have $\omega_\Lambda(z)\leq0$ in $\Sigma'_\Lambda$ i.e. $u(z)\leq u_\Lambda(z)$ in $\Sigma'_\Lambda$. Recall that $\omega_\Lambda(z)$ satisfies
\begin{equation}\label{004.25}
    \begin{cases}
-\Delta_\gamma \omega_\Lambda(z)=u^p(z)-u^p_\Lambda(z)\leq0,\;\;&~ \mbox{in} ~\Sigma'_\Lambda,\\[2mm]
\omega_\Lambda(z)\leq0,  &~ \mbox{on} ~\partial\Sigma'_\Lambda.
\end{cases}
\end{equation}
However, $\omega_\Lambda(z)\equiv0$ in $\Sigma'_\Lambda$ cannot hold since $u(0,0,0)=0<u(0,0,2\Lambda)$.
By Lemma \ref{unbounded1}, we have $\omega_\Lambda(z)<0$ in $\Sigma'_\Lambda$.
\vspace{1ex}\par
If $\omega_\Lambda(z)<0$ in $\Sigma'_\Lambda$, then the plane $T_{\lambda}^{\prime}$ can be slightly moved to the right.
Recall that ${\Sigma'_\lambda}^+=\{z\in\Sigma'_\lambda\: |\;\omega_\lambda(z)>0\}$. Since ${\Sigma'_\Lambda}^+= \emptyset$, we obtain $|{\Sigma'_\lambda}^+|\rightarrow0$ as $\lambda\rightarrow\Lambda^{+}$. We test Equation \eqref{004.14} with the function $\psi(z):=\omega_\lambda^{+}(z)\zeta_\varepsilon^2(z)$ for $\lambda>\Lambda$, i.e. we consider Equation \eqref{004.18}. The same procedure as before shows:
\begin{equation}\label{xj30}
  \|\omega_\lambda^+(z)\|_{L^{2_\gamma^*}({{\Sigma'_\lambda}^+})}^2
  \leq C\left(\int_{{\Sigma'_\lambda}^+}(u(z))^{(p-1)\frac{N_\gamma}{2}}dz\right)^{\frac{2}{N_\gamma}}
  \|\omega_\lambda^+(z)\|_{L^{2_\gamma^*}({{\Sigma'_\lambda}^+})}^2.
\end{equation}
Let $\lambda >\Lambda$ be close to $\Lambda$, such that
\begin{equation}\label{xj31}
  C\left(\int_{{\Sigma'_\lambda}^+}(u(z))^{(p-1)\frac{N_\gamma}{2}}dz\right)^{\frac{2}{N_\gamma}}<1.
\end{equation}
Then we have $\omega_\lambda^{+}(z)=0$ in ${\Sigma'_\lambda}$, i.e. $\omega_\lambda(z)\leq0$ in ${\Sigma'_\lambda}$. This contradicts the definition of $\Lambda$ and $\Lambda=+\infty$ follows.
\vskip 0.2cm

\noindent\textbf{Step 3: Reduce to the statement of Lemma \ref{th1.2} and end of the proof}.

\vskip 0.2cm

According to Step 1 and Step 2, we know that $u(x,\widetilde{y}',y_l)$ is increasing with respect to the variable $y_l$. On the other hand, $u$ is bounded in $\R^{N+l}_+$ by assumption. Therefore, there exists $v(x,\widetilde{y}')\geq0$ in $\R^{N+l-1}_+$ such that
\begin{equation}\label{004.29}
  \lim_{y_l\rightarrow+\infty}u(x,\widetilde{y}',y_l)=v(x,\widetilde{y}')\geq0.
\end{equation}
Clearly, $v(x,\widetilde{y}')$ satisfies
\begin{equation}\label{004.30}
  -\Delta_\gamma v(x,\widetilde{y}')=v^p(x,\widetilde{y}'),\;\; v(x,\widetilde{y}')\geq0,~ \mbox{in} ~\R^{N+l-1}.
\end{equation}
Since $p<\frac{N_\gamma+2}{N_\gamma-2}<\frac{N_\gamma+1-\gamma}{N_\gamma-3-\gamma}$, by Lemma \ref{th1.2} in the case of $\R^{N+l-1}$, we find that $v(x,\widetilde{y}')=0$. Combining this observation with the nonnegativity of $u$  and its property of being increasing in $y_l$ it follows that $u(z)=0$. This completes the proof
of Theorem \ref{th1.3}.
\end{proof}

\section{A Priori estimates}

In this section, we prove a prior estimates for positive solutions of the Equation \eqref{002.1} by employing blow up analysis in Theorem \ref{th1.4}. Before that, we give the Schauder estimates for degenerate elliptic equations in \cite{book} which we use in our proof.

\begin{Prop}\label{Schauder}\textbf{(Theorem 11.1 (Regularity), \cite{book})}\\
Let $L=\displaystyle\sum_{i=1}^q X_i^2$ be a H\"{o}rmander operator without drift in $\Omega\subset\R^N$. Suppose that $u\in\mathcal{D}'(\Omega)$ satisfies $Lu=f$ with $f\in C^\theta(\overline{\Omega})$. Then for any $\Omega'\subset\subset\Omega$, we have
\begin{equation}\label{wsgs11}
  \|u\|_{2+\theta,\Omega'}\leq C\big{(}\|u\|_{L^\infty(\Omega)}+\|f\|_{\theta,\overline{\Omega}}\big{)}.
\end{equation}
The positive constant $C$ depends on $L$, $\Omega$, $N$, $\theta$. Here for any $0<\theta<1$, we have used the following notations:
\begin{equation}\label{wsgs12}
  [u]_{\theta,K}=\sup_{x,y\in K, x\neq y}\frac{|u(x)-u(y)|}{(d(x,y))^\theta},
\end{equation}
\begin{equation}\label{wsgs13}
  \|u\|_{2+\theta,K}=\sum_{|J|\leq2}\left(\sup_K|X^J u|+[X^J u]_{\theta,K}\right).
\end{equation}
Let $1\leq j_i\leq q$ and $J=(j_1,\cdots,j_k)$ denotes a multi-index with length $|J|=k$, $0\leq k\leq q$. We adopt the notation $X^J=X_{j_1}X_{j_2}\cdots X_{j_k}$ for $|J|=k$, and $X^J=id$ for $|J|=0$.
\end{Prop}

\begin{Rem}\label{Sch}
The Grushin operator $\mathcal{L}=\Delta_{\gamma}=\Delta_x+|x|^{2\gamma}\Delta_y$ with $(x,y)\in\R^{N+l}$ is a H\"{o}rmander operator when $\gamma=m$ and $m\in\N$. Thus we can apply Proposition \ref{Schauder} to
$u\in C^\theta(\overline{\Omega})$, where $\Omega\subset\R^{N+l}$ is an open domain.
\end{Rem}

Next, we prove Theorem \ref{th1.4} and we apply the Schauder estimate \eqref{wsgs11} in order to obtain  a priori estimates.

\begin{proof}[\textbf{Proof of Theorem \ref{th1.4}:}]

Let $A$ be the set {of solutions to Equation \eqref{002.1} in $H_{\gamma}^{1,2}(\Omega) \cap C^{\theta}(\overline{\Omega})$}.
We show that there exists $C>0$ such that for any $u\in A$
\begin{equation}\label{wsgs20}
  \displaystyle\sup_{z\in\overline{\Omega}}u(z)\leq C.
\end{equation}

We prove this statement by contradiction. Assume that for any $k>0$, there exists $u_k\in A$ and maximum value points $q_k=(t_k,r_k)\in\overline{\Omega}$, such that
\begin{equation}\label{cwf2}
  M_k=\sup_{z\in\overline{\Omega}}u_k(z)=u_k(q_k)\rightarrow+\infty, ~\mbox{as}~k\rightarrow\infty.
\end{equation}
Since $\overline{\Omega}$ is compact, we may assume that there exists $q=(t,r)\in\overline{\Omega}$, such that $q_k\rightarrow q$ in $\overline{\Omega}$, as $k\rightarrow\infty$. In the following proof we distinguish the cases $q\in\Omega$ and $q\in\partial\Omega$.
\vspace{1ex}\\
\noindent \textbf{Case 1:} We will show that the case of $q\in\Omega$ can be reduced to the statement of Lemma \ref{th1.2}. Let $2d=d(q,\partial\Omega)$ be the $d$-distance of $q$ to the boundary of $\Omega$ and by $\widetilde{B}_R(0)$ denote the ball of radius $R$ with respect to the metric \eqref{006.6} centered at $0$.

Let $\lambda_k$ be a sequence of positive numbers such that $\lambda_k^{\frac{2}{p-1}}M_k=1$. Since $\frac{2}{p-1}>0$ and $M_k\rightarrow+\infty$ as $k\rightarrow\infty$,
we conclude that $\lambda_k\rightarrow0$ as $k\rightarrow\infty$. We transform the variables $z=(x,y)$ to a new pair of variables
$\widetilde{z}=(\widetilde{x},\widetilde{y})$ by defining:
\begin{equation}\label{002.3}
  \widetilde{x}=\frac{x-t_k}{\lambda_k},\;\;\widetilde{y}=\frac{y-r_k}{\lambda_k^{1+\gamma}}.
\end{equation}
Then we define the blow up sequence
\begin{equation}\label{002.4}
  v_k(\widetilde{z})=\lambda_k^{\frac{2}{p-1}}u_k(\lambda_k\widetilde{x}+t_k,\lambda_k^{1+\gamma}\widetilde{y}+r_k).
\end{equation}
Note that $v_k(\widetilde{z})$ is well defined in $\widetilde{B}_{\frac{d}{\lambda_k}}(0)$ for $k$ large enough and by $\lambda_k^{\frac{2}{p-1}}M_k=1$ we have
\begin{equation}\label{002.5}
  \sup_{\widetilde{B}_{\frac{d}{\lambda_k}}(0)}v_k(\widetilde{z})=1=v_k(0).
\end{equation}
Moreover, $v_k(\widetilde{z})$ satisfies
\begin{equation}\label{002.6}
\begin{aligned}
  &-\left(\lambda_k^{2\gamma}\Delta_{\widetilde{x}}v_k(\widetilde{z})
  +|\lambda_k\widetilde{x}+t_k|^{2\gamma}
  \Delta_{\widetilde{y}}v_k(\widetilde{z})\right)
  \\=&\lambda_k^{\frac{2p}{p-1}+2\gamma}f\Big{(}(\lambda_k\widetilde{x}+t_k,\lambda_k^{1+\gamma}\widetilde{y}+r_k),
  \lambda_k^{-\frac{2}{p-1}}
  v_k(\widetilde{z})\Big{)},\;~\mbox{in}~\widetilde{B}_{\frac{d}{\lambda_k}}(0).
\end{aligned}
\end{equation}
By assumption \eqref{002.2}, i.e.
\begin{equation}\label{002.7}
\lim_{t\rightarrow+\infty}t^{-p}(f(z,t)-h(z)t^p)=0,
\end{equation}
we have
\begin{equation}\label{002.8}
  \lim_{k\rightarrow\infty}\lambda_k^{\frac{2p}{p-1}}f\Big{(}(\lambda_k\widetilde{x}+t_k,\lambda_k^{1+\gamma}\widetilde{y}+r_k),\lambda_k^{-\frac{2}{p-1}}v_k(\widetilde{z})\Big{)}
  -h(\lambda_k\widetilde{x}+t_k,\lambda_k^{1+\gamma}\widetilde{y}+r_k)v_k^p(\widetilde{z})=0.
\end{equation}

So, as $k \rightarrow \infty$, the sequence $v_k(\widetilde{z}), k \in \mathbb{N}$ satisfies in $\widetilde{B}_{\frac{d}{\lambda_k}}(0)$:
\begin{equation}\label{cwf3}
  -\left(\lambda_k^{2\gamma}\Delta_{\widetilde{x}}v_k(\widetilde{z})
  +|\lambda_k\widetilde{x}+t_k|^{2\gamma}
  \Delta_{\widetilde{y}}v_k(\widetilde{z})\right)
  =\lambda_k^{2\gamma}h(\lambda_k\widetilde{x}+t_k,\lambda_k^{1+\gamma}\widetilde{y}+r_k)v_k^p(\widetilde{z})+ \lambda_k^{2\gamma}o(1).
\end{equation}
Certainly, $v_k(\widetilde{z})$ satisfies Equation (\ref{cwf3}) on any given ball $\widetilde{B}_R(0)$, with $
\Omega\subset\widetilde{B}_R(0)\subset\widetilde{B}_{\frac{d}{\lambda_k}}(0)$.
Since $u_k(z)\in H^{1,2}_\gamma(\Omega)\cap C^\theta(\overline{\Omega})$, we conclude that $v_k(\widetilde{z})\in H^{1,2}_\gamma(\widetilde{B}_R(0))\cap C^\theta(\overline{\widetilde{B}_R(0)})$. By \eqref{002.5}, we have
\begin{equation}\label{cwff}
  \sup_{\widetilde{B}_R(0)}v_k(\widetilde{z})=1.
\end{equation}
Recall that $H^{1,2}_\gamma(\widetilde{B}_R(0))$ is a Hilbert space with compact embedding $H^{1,2}_\gamma(\widetilde{B}_R(0))\hookrightarrow L^2(\widetilde{B}_R(0))$ (see \cite{4}). {Note that \eqref{002.6}, properties of $f$ as well as \eqref{cwff} imply that $(v_k(\tilde{z}))_k$ is bounded in $H^{1,2}_\gamma(\widetilde{B}_R(0))$.}
By passing to a subsequence, if necessary, we may assume that there exists $v(\widetilde{z})\in H^{1,2}_\gamma(\widetilde{B}_R(0))$, such that
\begin{equation}\label{cwf4}
  v_k(\widetilde{z})\rightharpoonup v(\widetilde{z}),\;\;~\mbox{weakly in}~H^{1,2}_\gamma(\widetilde{B}_R(0)),
\end{equation}
\begin{equation}\label{cwf5}
  v_k(\widetilde{z})\rightarrow v(\widetilde{z}),\;\;~\mbox{strongly in}~L^2(\widetilde{B}_R(0)),
\end{equation}
\begin{equation}\label{cwf6}
  v_k(\widetilde{z})\rightarrow v(\widetilde{z}),\;\;~\mbox{a.e. on}~\widetilde{B}_R(0).
\end{equation}
For any $\varphi(\widetilde{z})\in C_0^\infty(\widetilde{B}_R(0))$, from \eqref{cwf3} we have as $k \rightarrow \infty$:
\begin{equation}\label{cwf7}
\begin{aligned}
  &\int_{\widetilde{B}_R(0)}\nabla_{\widetilde{x}}v_k(\widetilde{z})\cdot\nabla_{\widetilde{x}}\varphi(\widetilde{z})
  +|\widetilde{x}+\lambda_k^{-1} t_k|^{2\gamma}
  \nabla_{\widetilde{y}}v_k(\widetilde{z})\cdot\nabla_{\widetilde{y}}\varphi(\widetilde{z})d\widetilde{z}
  \\=&\int_{\widetilde{B}_R(0)}h(\lambda_k\widetilde{x}+t_k,\lambda_k^{1+\gamma}\widetilde{y}+r_k)
  v_k^p(\widetilde{z})
  \varphi(\widetilde{z})d\widetilde{z}+o(1).
\end{aligned}
\end{equation}
\vspace{1mm}\par
First assume that $\lim_{k \rightarrow \infty} \lambda_k^{-1} |t_k| =0$. Using
\eqref{cwf4}, \eqref{cwf6} together with Lebesgues Dominated Convergence Theorem implies
\begin{equation}\label{cwf8}
  \int_{\widetilde{B}_R(0)}\nabla_\gamma v(\widetilde{z})\cdot\nabla_\gamma\varphi(\widetilde{z})d\widetilde{z}
  =\int_{\widetilde{B}_R(0)}h(q)v^p(\widetilde{z})
  \varphi(\widetilde{z})d\widetilde{z}, 
\end{equation}
i.e.
\begin{equation}\label{002.10}
  -\Delta_\gamma v(\widetilde{z})=h(q)v^p(\widetilde{z}),\;~\mbox{in}~H^{1,2}_\gamma(\widetilde{B}_R(0)).
\end{equation}
Assume that $\lim \sup_k \lambda_k^{-1} |t_k|  \in (0, \infty)$.  By passing to a subsequence if necessary we can assume that
$\lim_k \lambda_k^{-1} t_k = \eta \in \mathbb{R}^N$ exists. Then
it follows from \eqref{cwf7} that
\begin{equation}\label{cwf8a-new}
\int_{\widetilde{B}_R(0)} \nabla_{\widetilde{x}} v(\widetilde{z}) \cdot \nabla_{\widetilde{x}} \varphi(\widetilde{z}) + |\widetilde{x}+ \eta|^{2 \gamma} \nabla_{\widetilde{y}} v(\widetilde{z}) \cdot \nabla_{\widetilde{y}}\varphi(\widetilde{z}) \:  d\widetilde{z}= \int_{\widetilde{B}_R(0)}h(q)v^p(\widetilde{z})
  \varphi(\widetilde{z})d\widetilde{z}.
\end{equation}
Finally, assume that $\lim \sup_k \lambda_k^{-1} |t_k| =\infty$. In this case  \eqref{cwf7} implies that
\begin{equation}\label{cwf8a}
  \int_{\widetilde{B}_R(0)}\nabla_{\widetilde{y}} v(\widetilde{z})\cdot\nabla_{\widetilde{y}}\varphi(\widetilde{z})d\widetilde{z}
  =0. 
\end{equation}

On the other hand, by the H\"older continuity of the function $h(z)$, \eqref{002.5}, \eqref{cwf3} and the Schauder estimate \eqref{wsgs11}, for any given $R$ such that $\widetilde{B}_R(0)\subset\widetilde{B}_{\frac{d}{\lambda_k}}(0)$, we find uniform bounds for $\|v_k\|_{2+\theta,\overline{\widetilde{B}_R(0)}}$. After passing to a subsequence if necessary we have uniformly on compact sets:
\begin{equation}\label{cwf9}
  v_k(\widetilde{z})\rightarrow v(\widetilde{z}),
\end{equation}
and
\begin{equation}\label{wys1}
  -\Delta_\gamma v_k(\widetilde{z})\rightarrow-\Delta_\gamma v(\widetilde{z}).
\end{equation}
Let $k\rightarrow\infty$ in \eqref{cwf3} and consider the equation in $\widetilde{B}_R(0)$, by continuity, if $\lim_{k \rightarrow \infty} \lambda_k^{-1}t_k=0$, we have
\begin{equation}\label{cwf10}
  -\Delta_\gamma v(\widetilde{z})=h(q)v^p(\widetilde{z}),\;~\mbox{in}~C^\theta(\overline{\widetilde{B}_R(0)}).
\end{equation}
Assume that $\lim_k \lambda_k^{-1} t_k = \eta \in \mathbb{R}^N$ exists, then (\ref{cwf8a-new}) holds and therefore
\begin{equation}\label{cwf10-new}
-\left( \Delta_{\widetilde{x}} v(\widetilde{z}) + |\widetilde{x}+ \eta|^2 \Delta_{\widetilde{y}} v(\widetilde{z}) \right)= h(q) v^p (\widetilde{z})
\end{equation}
in $C^\theta(\overline{\widetilde{B}_R(0)})$. Finally, if we assume that $\lim \sup_k \lambda_k^{-1} |t_k| =\infty$, then Equation \eqref{cwf3} implies that
\begin{equation}\label{cwf10aa-New}
\Delta_{\widetilde{y}}v(\widetilde{z})=0
\end{equation}
 in $C^\theta(\overline{\widetilde{B}_R(0)})$.

Combining \eqref{002.10} with \eqref{cwf10}, \eqref{cwf8a-new} with \eqref{cwf10-new} and \eqref{cwf8a} with \eqref{cwf10aa-New}
shows that $v(\widetilde{z})$ satisfies one of the following
three equations in $H^{1,2}_\gamma(\widetilde{B}_R(0))\cap C^\theta(\overline{\widetilde{B}_R(0)})$:
\begin{align}
 -\Delta_\gamma v(\widetilde{z})&=h(q)v^p(\widetilde{z}), \hspace{2ex} \mbox{or}\label{cwf11} \\
-\left(\Delta_{\widetilde{x}} v(\widetilde{z}) + |\widetilde{x}+ \eta|^2 \Delta_{\widetilde{y}} v(\widetilde{z})\right) &= h(q) v^p (\widetilde{z}) , \hspace{2ex} \mbox{or}
 \label{cwf11-new}\\
 \Delta_{\widetilde{y}}v(\widetilde{z})&=0. \label{cwf11a}
 \end{align}
From \eqref{002.4}, we know that
\begin{equation}\label{cwf12}
  v_k(0)=\lambda_k^{\frac{2}{p-1}}u_k(q_k)=1.
\end{equation}
As $k\rightarrow\infty$ and by H\"older continuity we conclude that
\begin{equation}\label{cwf13}
  v(0)=1.
\end{equation}

We claim that $v(\widetilde{z})$ is well defined in all of $\R^{N+l}$. To show this, we consider a ball $\widetilde{B}_{R'}(0)$ which satisfies $\widetilde{B}_R(0)\subset\widetilde{B}_{R'}(0)\subset\widetilde{B}_{\frac{d}{\lambda_k}}(0)$. Repeating the above argument with respect to $\widetilde{B}_{R'}(0)$, the sequence $v_k$ has a convergent subsequence $v_{k_j}$, such that
\begin{equation}\label{cwf14}
  v_{k_j}\rightarrow v'\;\;~\mbox{on}~\widetilde{B}_{R'}(0),
\end{equation}
and $v'$ satisfies $v'|_{\widetilde{B}_R(0)}=v$. By unique continuation $v$ can be extended as a solution to the entire space $\R^{N+l}$ which we will still
denote by $v$.
In fact since $2d=d(q,\partial\Omega)$ is the $d$-distance of $q$ to $\partial\Omega$, for any $z=(x, y) \in \partial\Omega$ one has $d(q,z)=d((t,r),(x,y))\geq2d$.
If $k$ is sufficiently large such that $d(q,q_k)=d((t-t_k, r-r_k),0) \leq d$, then by \eqref{002.3} we have
\begin{equation*}
\begin{aligned}
  2d\leq d\Big{((t,r),(\lambda_k\widetilde{x}+t_k,\lambda_k^{1+\gamma}\widetilde{y}+r_k})\Big{)}
  \leq&d((t-t_k,r-r_k), 0)+ d\big{(}(\lambda_k \tilde{x}, \lambda_k^{1+ \gamma} \tilde{y}),0\big{)}
  \\=&d(q,q_k)+\lambda_kd((\tilde{x}, \tilde{y}),0)
  \\\leq&d+\lambda_kd((\tilde{x}, \tilde{y}),0).
\end{aligned}
\end{equation*}
Set $\widetilde{z}=(\widetilde{x},\widetilde{y})$, from above inequality we obtain $d(\widetilde{z},0)\geq\frac{d}{\lambda_k}$. By \eqref{002.4}, i.e. $ v_k(\widetilde{z})=\lambda_k^{\frac{2}{p-1}}u_k(z)$.
Since $u_k(z)=0$ when $z\in\partial\Omega$, then it holds $v_k(\widetilde{z})=0$ when $d(\widetilde{z},0)\geq\frac{d}{\lambda_k}$.
Let $\lambda_k\rightarrow0$, we have $v_k(\widetilde{z})=0$ as $d(\widetilde{z},0)\rightarrow\infty$.
Therefore, the limit $v$ of $v_k$ vanishes at infinity.

In particular, note that in the cases \eqref{cwf11} and \eqref{cwf11-new} the function $v_{\eta}(\widetilde{z}):=v(\widetilde{x}- \eta, \widetilde{y})$ for some $\eta \in \mathbb{R}^N$ satisfies
\begin{equation}\label{002.13}
\begin{cases}
-\Delta_\gamma v_{\eta}(\widetilde{z})=h(q)v_{\eta}^p(\widetilde{z}),\;\;v_{\eta}(\widetilde{z})>0,&~ \mbox{in} ~H^{1,2}_\gamma(\R^{N+l})\cap C^\theta(\R^{N+l}),\\[2mm]
v_{\eta}(\eta,0)=v(0)=1.
\end{cases}
\end{equation}
In the case \eqref{cwf11a} the function $v(\widetilde{z})$ satisfies:
\begin{equation}\label{002.13a}
\begin{cases}
\Delta_{\widetilde{y}}v(\widetilde{z})=0,\;\;v(\widetilde{z})>0,&~ \mbox{in} ~H^1(\R^{N+l})\cap C^\theta(\R^{N+l}),\\[2mm]
v(0)=1.
\end{cases}
\end{equation}
Note that \eqref{002.13a} states that $v(\widetilde{x}, \cdot )$ is a positive harmonic function on $\mathbb{R}^l$ for each $\widetilde{x}\in \mathbb{R}^N$ and by well-known results it must be constant. Since $v$ vanishes at infinity, it follows that $v(\widetilde{x}, \cdot ) \equiv 0$ for a.e. $\widetilde{x} \in \mathbb{R}^l$. Hence $v \equiv 0 $ by continuity which contradicts the assumption $v(0)=1$.

\vspace{1ex}\par
In order to derive a contradiction from \eqref{002.13} we perform a stretching transformation such that the constant term $h(q)$ in \eqref{002.13} vanishes. Let
\begin{equation}\label{002.14}
  \widetilde{x}:=\frac{x'}{\sqrt{h(q)}},\;\;\widetilde{y}:=\frac{y'}{\left(\sqrt{h(q)}\right)^{1+\gamma}}.
\end{equation}
We write
\begin{equation}\label{002.15}
  v_{\eta}(\widetilde{z})=v_{\eta}(\widetilde{x},\widetilde{y})=v_{\eta}\left(\frac{x'}{\sqrt{h(q)}},\frac{y'}
  {\left(\sqrt{h(q)}\right)^{1+\gamma}}\right)=:v'_{\eta}(x',y')=v'_{\eta}(z').
\end{equation}
Then $v_{\eta}'(z')$, assuming one of the cases \eqref{cwf11} or  \eqref{cwf11-new},  satisfies:
\begin{equation}\label{002.16}
\begin{cases}
-\Delta_\gamma v_{\eta}'(z')=v_{\eta}'^p(z'),\;\;v_{\eta}'(z')>0,&~ \mbox{in} ~H^{1,2}_\gamma(\R^{N+l})\cap C^\theta(\R^{N+l}),\\[2mm]
v_{\eta}'(\eta \sqrt{h(q)},0)=1.
\end{cases}
\end{equation}
But according to Lemma \ref{th1.2} and \cite[Theorem 1.2]{21}, equation $-\Delta_\gamma v'=v'^p$
only has the trivial solution $v'\equiv0$ contradicting $v_{\eta}'(\eta \sqrt{h(q)},0)=1$ in the first case.
Hence we obtain that $q\in\Omega$ is not valid. This completes the proof of Case 1.

\vskip 0.1cm
\noindent \textbf{Case 2:} We will show that the case of $q\in\partial\Omega$ can be reduced to the statement of Theorem \ref{th1.3}. Since $\partial\Omega$ is $C^1$, we can straighten $\partial\Omega$ in a neighborhood of $q$ by a non-singular $C^1$ change of coordinates.

Let $y_l=\psi(z)$, where $\psi$ is a $C^1$ defining function for $\partial\Omega$ and $z=(x,y)$. Define a new coordinates system as follows:
\begin{eqnarray*}
  \widehat{x_i} &=& x_i,\;\;i=1,\cdots,N, \\
  \widehat{y_j} &=& y_j,\;\;j=1,\cdots,l-1, \\
  \widehat{y_l}&=& y_l-\psi(z).
\end{eqnarray*}
{The solution to \eqref{002.1} in the new coordinate system is still denoted as $u$.} For convenience, we use the notation $z=(x,y)$ instead of $\widehat{z}=(\widehat{x},\widehat{y})$.
Then $q\in\partial\Omega$ is located in the hyperplane $y_l=0$. We define $\lambda_k$, $v_k(\widetilde{z})$ as in Case 1.
Let $d_k:=d(q_k,\partial\Omega)$ be the distance of $q_k$ to the boundary measured with respect to the metric $d$ in \eqref{006.6}. Then $d_k=q_k\cdot e_{y_l}$, where $e_{y_l}$ is the unit vector in $y_l$ direction. For any $\delta >0$ and $k$ large enough, $v_k(\widetilde{z})$ is well defined in
$\widetilde{B}_{\frac{\delta}{\lambda_k}}(0)\cap\{\widetilde{z}\;|\;\widetilde{y_l}>-\frac{d_k}{\lambda_k^{1+\gamma}}\}$. Moreover, by assumption \eqref{002.2}, the function $v_k(\widetilde{z})$ satisfies
\begin{equation}\label{002.17}
\begin{split}
    -\left(\lambda_k^{2\gamma}\Delta_{\widetilde{x}}v_k(\widetilde{z})
  +|\lambda_k\widetilde{x}+t_k|^{2\gamma}
  \Delta_{\widetilde{y}}v_k(\widetilde{z})\right)
  =\lambda_k^{2\gamma}h(\lambda_k\widetilde{x}+t_k,\lambda_k^{1+\gamma}\widetilde{y}+r_k)v_k^p(\widetilde{z})+ \lambda_k^{2\gamma}o(1),
\end{split}
\end{equation}
in
\begin{equation*}
\widetilde{B}_{\frac{\delta}{\lambda_k}}(0)\cap\Big{\{}\widetilde{z} |\;\widetilde{y_l}>-\frac{d_k}{\lambda_k^{1+\gamma}}\Big{\}},
\end{equation*}
and it holds
\begin{equation}\label{002.18}
  \sup_{\widetilde{B}_{\frac{\delta}{\lambda_k}}(0)\cap\big{\{}\widetilde{z}\:|\;\widetilde{y_l}>-\frac{d_k}{\lambda_k^{1+\gamma}}\big{\}}}v_k(\widetilde{z})=1.
\end{equation}

We claim that $\frac{d_k}{\lambda_k^{1+\gamma}}$ is bounded from below. In fact, by the $C^\theta$-regularity we have
\begin{equation}\label{002.19}
   \big{|}v_k(0,0,0)-v_k\big{(}0,0,-\frac{d_k}{\lambda_k^{1+\gamma}}\big{)}\big{|}
\leq L\Big{(}\frac{d_k}{\lambda_k^{1+\gamma}}\Big{)}^\theta,
\end{equation}
where $L>0$ is a constant.
On the other hand by \eqref{002.4}, we have
\begin{equation}\label{bbs1}
  v_k(0,0,0)=\lambda_k^{\frac{2}{p-1}}u_k(t_k,r_k)=\lambda_k^{\frac{2}{p-1}}u_k(q_k)=\lambda_k^{\frac{2}{p-1}}M_k=1,
\end{equation}
\begin{equation}\label{bbs2}
  v_k\big{(}0,0,-\frac{d_k}{\lambda_k^{1+\gamma}}\big{)}=\lambda_k^{\frac{2}{p-1}}u_k(t_k,r_k-d_k e_l)
  =\lambda_k^{\frac{2}{p-1}}u_k\big|_{\partial\Omega}=0.
\end{equation}
Thus, we obtain
\begin{equation}\label{002.20}
  \big{|}v_k(0,0,0)-v_k\big{(}0,0,-\frac{d_k}{\lambda_k^{1+\gamma}}\big{)}\big{|}=1.
\end{equation}
Combing \eqref{002.19} and \eqref{002.20}, and letting $k\rightarrow\infty$, we obtain
\begin{equation}\label{002.21}
  \frac{d_k}{\lambda_k^{1+\gamma}}\geq \Big{(}\frac{1}{L}\Big{)}^{\frac{1}{\theta}},
\end{equation}
that is $\frac{d_k}{\lambda_k^{1+\gamma}}$ is bounded from below.

Assume that $\frac{d_k}{\lambda_k^{1+\gamma}}$ is not bounded from above as $k \rightarrow \infty$. After passing to a subsequence, if necessary, we may assume that $\frac{d_k}{\lambda_k^{1+\gamma}}\rightarrow+\infty$ as $k\rightarrow\infty$. Then $v_k(\widetilde{z})$ is well defined in a ball $\widetilde{B}_R(0)$ of any fixed radius $R>0$ and satisfies
\begin{equation}\label{002.22}
\begin{cases}
-\left(\Delta_{\widetilde{x}}v_k(\widetilde{z})
  +|\widetilde{x}+\lambda_k^{-1}t_k|^{2\gamma}
  \Delta_{\widetilde{y}}v_k(\widetilde{z})\right)
  =h(\lambda_k\widetilde{x}+t_k,\lambda_k^{1+\gamma}\widetilde{y}+r_k)v_k^p(\widetilde{z})
  +o(1),\\[2mm]
v_k(0)=1, \hspace{1ex} \mbox{\it and} \hspace{1ex} v_k(\widetilde{z})>0. &
\end{cases}
\end{equation}
Under these conditions we can repeat the discussion in Case 1, and we obtain that $q\in\partial\Omega$ with $\frac{d_k}{\lambda_k^{1+\gamma}}$ being unbounded is not possible.

Assume now that  $\frac{d_k}{\lambda_k^{1+\gamma}}$ is bounded from above as $k \rightarrow \infty$. After passing to a subsequence, if necessary, we may assume that $\frac{d_k}{\lambda_k^{1+\gamma}}\rightarrow s$ as $k\rightarrow\infty$ for some $s>0$. By arguments as in Case 1, and notation there we obtain
\begin{equation}\label{cwf31}
\begin{cases}
-\Delta_\gamma v_{\eta}(\widetilde{z})=h(q)v_{\eta}^p(\widetilde{z}),\;\;v_{\eta}(\widetilde{z})>0,&~ \mbox{in} ~\{y_l>-s\},\\[2mm]
v_{\eta}(\widetilde{z})=0,&~ \mbox{on} ~\{y_l=-s\},
\end{cases}
\end{equation}
or
\begin{equation}\label{cwf31a}
\begin{cases}
\Delta_{\widetilde{y}}v(\widetilde{z})=0,\;\;v(\widetilde{z})>0,&~ \mbox{in} ~\{y_l>-s\},\\[2mm]
v(\widetilde{z})=0,&~ \mbox{on} ~\{y_l=-s\}.
\end{cases}
\end{equation}
According to \cite[Theorem 7.22]{ABR} the problem \eqref{cwf31a} only has the solutions $v(\tilde{z}) = c(y_l+s)$ where $c>0$. Since $v$ is bounded by
construction we obtain  a contradiction.
Hence \eqref{cwf31} must hold.
By applying a stretching transformation as in Case 1, we end up with the set up of Theorem \ref{th1.3}. However, according to Theorem \ref{th1.3}
\eqref{cwf31}
only has the solution $v(\widetilde{z})\equiv0$, which contradicts with $v(0)=1$. This implies that $q\in\partial\Omega$ with $\frac{d_k}{\lambda_k^{1+\gamma}}$ being bounded is not possible either. This completes the proof of Case 2.

Therefore, we conclude that $q\in\partial\Omega$ is not valid. Hence we have reached a contradiction to the assumption that (\ref{wsgs20}) was not true which completes the proof.
\end{proof}

\section{Existence of solutions}

In this section, we prove the existence of solutions for the Equation \eqref{002.1} by applying a topological degree theorem in Theorem \ref{th1.5}. We start by presenting the relevant definitions in Definition \ref{zhui} and the Leray Schauder degree theorem in Proposition \ref{lemma2}.
\vspace{2mm}

\begin{Def}\label{zhui}\textbf{(Cone and OBS, \cite{8})}\\
A cone $P$ in a Banach space $X$ is a closed subset of $X$ such that:
\begin{itemize}
\item[(i)]  if $x,y\in P$ and $\alpha,\beta>0$, then $\alpha x+\beta y\in P$;
\item[(ii)] if $x\in P$ and $x\neq0$, then $-x\notin P$.
\end{itemize}
A cone $P$ induces a partial ordering on $X$ by $x\leq y$ iff $y-x\in P$. We call $(X,P)$ an ordered Banach space $(OBS)$.
\end{Def}

\begin{Prop}\label{lemma2}\textbf{(Leray Schauder degree theorem, \cite{19})}\\
Suppose that $(X,P)$ is an $OBS$, $\varphi: P\rightarrow P$ is a compact map such that $\varphi(0)=0$. Assume that there exist numbers $r$, $R$ such that $0<r<R$ and a vector $v\in P\setminus\{0\}$ such that:
\begin{itemize}
\item[(i)] $x\neq t\varphi(x)$ for $0\leq t\leq1$ and $\|x\|=r$;
\item[(ii)] $x\neq\varphi(x)+tv$ for $t\geq0$ and $\|x\|=R$.
\end{itemize}
\noindent
If $U=\{x\in P|\;r<\|x\|<R\}$, then one has $deg(\varphi,U,0)=-1$. In particular, $\varphi$ has a fixed point in $U$.
\end{Prop}

\begin{Rem}\label{rem3}\textbf{(\cite{19})}\\
Condition \textup{(i)} is replaced by \textup{(i')}: if there exists a bounded linear operator $A: X\rightarrow X$, such that $A(P)\subset P$ and $A$ has spectral radius strictly less than 1 and
$$\varphi(x)\leq A(x) \hspace{2ex} \mbox{\it  for}\hspace{2ex}  x\in P, \;\; \|x\|=r.$$
Condition \textup{(ii)} is replaced by \textup{(ii')}: there is a compact map $F:\overline{B}_R(0)\times[0,\infty)\rightarrow P$, where $\overline{B}_{R}(0)$ is the closed ball in $X$ of radius $R>0$ and centered in zero, such that

\;\; \textup{(1)} $F(x,0)=\varphi(x)$ for $\|x\|=R$,

\;\; \textup{(2)} $F(x,t)\neq x$ for $\|x\|=R$ and $t\in[0,+\infty)$,

\;\; \textup{(3)} $F(x,t)=x$ has no solution for $x\in\overline{B}_R(0)$ and $t\geq t_0$ for some $t_0$ large enough.
\end{Rem}

\begin{proof}[\textbf{Proof of Theorem \ref{th1.5}:}]

According to Propsition \ref{lemma2}, let $X=L^2(\Omega)$ and note that
\begin{equation}\label{cone}
  {P_{\theta}=  \big{\{}u\in H^{1,2}_\gamma(\Omega)|\;u(z)\geq0,\;in \;\Omega\big{\}}\cap C^{\theta}(\overline{\Omega})}
\end{equation}
defines a cone in $L^2(\Omega)$. {According to \cite{hongge},} by using the weighted Poincar\'{e} inequality and the Lax-Milgram theorem, it is easy to deduce that the inverse of the degenerate elliptic operator $-\Delta_\gamma$
defined on $H_{\gamma}^{1,2}(\Omega)$ with Dirichlet boundary conditions exists. It defines a continuous linear operator
\begin{equation*}
(-\Delta_\gamma)^{-1}: L^2(\Omega)\rightarrow H^{1,2}_\gamma(\Omega).
\end{equation*}
Furthermore, since $H^{1,2}_\gamma(\Omega)$ is compactly embedded into $L^2(\Omega)$ we conclude that $(-\Delta_\gamma)^{-1}$ is a compact operator on $L^2(\Omega)$.  According to the spectral theory of compact operators, the degenerate elliptic operator $-\Delta_\gamma$ has positive discrete eigenvalues, denoted as $\{\lambda_k\}_{k=1}^\infty$, satisfying $0<\lambda_1\leq \lambda_2\leq\lambda_3\leq\cdots\leq\lambda_k\leq\cdots$ and $\lambda_k\rightarrow+\infty$ as $k\rightarrow+\infty$. If $\{\phi_k\}_{k=1}^\infty$
is a sequence of eigenfunctions corresponding to the eigenvalues $\{\lambda_k\}_{k=1}^\infty$ with Dirichlet zero boundary condition, i.e.
\begin{equation}\label{eigenvalue}
\begin{cases}
-\Delta_\gamma \phi_k=\lambda_k\phi_k,  &~ \mbox{in} ~\Omega,\\[2mm]
\phi_k=0,  &~ \mbox{on} ~\partial\Omega,
\end{cases}
\end{equation}
then $\{\phi_k\}_{k=1}^\infty$ forms an orthogonal basis for the space $H^{1,2}_\gamma(\Omega)$ as well as for $L^2(\Omega)$.
Suppose that $u\in H^{1,2}_{\gamma}(\Omega)$ satisfies \eqref{002.1} and define
\begin{equation}\label{001.2}
  u=(-\Delta_\gamma)^{-1}f(z,u)=:Lf(z,u).
\end{equation}
Then $L= (-\Delta_\gamma)^{-1}: L^2(\Omega)\rightarrow L^2(\Omega)$ is a compact operator and
\begin{equation}\label{001.6}
  \varphi(u):=Lf(z,u),
\end{equation}
defines a compact map from $P_\theta\rightarrow P_\theta$. In fact, $\varphi$ preserves the cone $P_\theta$ based on the maximum principle for the weak solution in \cite{22} and the assumption $f\geq 0$. Moreover, it satisfies $\varphi(0)=0$ since $f(z,0)\equiv 0$ according to condition (iii) of Theorem \ref{th1.5}.
\vspace{1mm}\par
Next, we verify the condition \textup{(i')} in Remark \ref{rem3}. By condition \textup{(iii)} of the theorem take proper $\widetilde{r}>0$ and $\alpha<\lambda_1$, such that
for $z \in \overline{\Omega}$:
\begin{equation}\label{f}
  f(z,t)\leq\alpha t,\;~\mbox{for}~0\leq t\leq\widetilde{r}.
\end{equation}
Define $A:=\alpha L$, then $A:P_\theta\rightarrow P_\theta$ is a bounded linear operator.
Moreover, according to the weak maximum principle \cite[Theorem 5.6]{22} and \eqref{f}, we have
\begin{equation}\label{001.7}
  \varphi(u)=Lf(z,u)\leq\alpha Lu=Au, ~\mbox{for}~\|u\|\leq r.
\end{equation}
Since the spectral radius of $A$ is strictly less than 1, the condition \textup{(i')} is satisfied.

Next we verify condition \textup{(ii')} in Remark \ref{rem3}. Define
\begin{equation}\label{001.8}
  F(u,t)=L(f(z,u)+t\phi_1),
\end{equation}
where $t\in(0,+\infty)$ and $\phi_1$ denotes an eigenfunction corresponding to $\lambda_1$. We may assume that $\|\phi_1\|_{L^2(\Omega)}=1$. We first verify \textup{(3)} in \textup{(ii')} by contradiction.
{Assume that there is a sequence $(t_j) \subset \mathbb{R}_+$ tending to infinity such that
$F(v_j,t_j)=v_j$ has a solution $v_j\in \overline{B}_R(0)$ for each $j \in \mathbb{N}$.} That is to say that
\begin{equation}\label{001.125}
  -\Delta_\gamma v_j=f(z,v_j)+t_j\phi_1
\end{equation}
has a solution $v_j$.
By condition \textup{(ii)} of the theorem, there are constants $M>0$ and
$k\in(\lambda_1,\displaystyle\lim_{t\rightarrow+\infty}\frac{f(z,t)}{t})$, such that
\begin{equation}\label{kM}
 f(z,t)\geq kt-M,\;~\mbox{for}~t\rightarrow+\infty.
\end{equation}
We multiply \eqref{001.125} by $\phi_1$ and integrate over $\Omega$ to obtain
\begin{equation}\label{001.10}
  -\int_\Omega\phi_1(z)\Delta_\gamma v_j(z)dz=\int_\Omega\phi_1(z)f(z,v_j(z))dz+t_j.
\end{equation}
Thus by \eqref{kM} and integration by parts, it holds that
\begin{equation}\label{dui}
  \lambda_1\int_\Omega\phi_1(z)v_j(z)dz\geq k\int_\Omega\phi_1(z)v_j(z)dz-M\int_\Omega\phi_1(z)dz+t_j,
\end{equation}
i.e.
\begin{equation}\label{dui1}
  t_j+(k-\lambda_1)\int_\Omega\phi_1(z)v_j(z)dz\leq M\int_\Omega\phi_1(z)dz\leq M|\Omega|^{\frac{1}{2}},
\end{equation}
where $|\Omega|$ denotes the measure of $\Omega$. Since $v_j\in \overline{B}_R(0)$  for each $j$ the inequality \eqref{dui1} implies that
\begin{equation}\label{dui2}
  t_j\leq C_1\,
\end{equation}
for some $C_1>0$ which contradicts $t_j\rightarrow+\infty$. Therefore, we conclude that $F(v,t)=v$ has no solution for $t\geq t_0$, where $t_0>0$ is sufficiently large.
\vspace{1mm}\par
Now we verify \textup{(1)} and \textup{(2)} in \textup{(ii')}.
For each choice of $t>0$, we apply Theorem \ref{th1.4} to the equation $-\Delta_\gamma v=f(z,v)+t\phi_1$. The new nonlinearity is $f+t\phi_1$, which is continuous. And for $p\in[1+\frac{4}{N_\gamma},\frac{N_\gamma+2}{N_\gamma-2})$, it still satisfies $\displaystyle\lim_{v\rightarrow+\infty}\frac{f(z,v)+t\phi_1}{v^p}=h(z)$. By Theorem \ref{th1.4}, if $v\in H^{1,2}_\gamma(\Omega)\cap C^\theta(\overline{\Omega})$ is any positive solution to $-\Delta_\gamma v=f(z,v)+t\phi_1$, then it holds that $\sup_{z\in\overline{\Omega}}v(z)\leq C_2$, where $C_2$ depends on $p$, $\Omega$, the behavior of $f+t\phi_1$ in the limit at infinity (i.e. the constant $C_2$ may depend on $h$, but does not depend on the value of $t$). Combined with properties of $f+t\phi_1$, we obtain $\|v\|_{H^{1,2}_\gamma(\Omega)}\leq C_3$. Thus there exists a constant $R$ large enough such that this equation has no solution when $\|v\|_{H^{1,2}_\gamma(\Omega)}\geq R$.
Therefore, the condition $F(v,t)\neq v$ for $\|v\|_{H^{1,2}_\gamma(\Omega)}=R$, $t\in(0,+\infty)$ is verified.
\vspace{1mm}\par
Furthermore, the condition $F(u,0)=\varphi(u)$ for $\|u\|_{H^{1,2}_\gamma(\Omega)}=R$ is easily verified by the definition of $F(u,t)$.
According to Proposition \ref{lemma2}, $\varphi$ has a fixed point in $U= \{ x \in P_\theta \: |\: r < \|x\| <R\}$, which means that Equation \eqref{002.1} has at least one positive solution $u\in H^{1,2}_\gamma(\Omega)\cap C^\theta(\overline{\Omega})$.
\end{proof}

\begin{center}
\noindent\textbf{Statements and Declarations}.~~
\end{center}
{\bf Data availability statement:}  The manuscript has no associated data.
\vspace{1mm}\\
{\bf Competing interests:} On behalf of all authors, the corresponding author states that there is no conflict of interest.
\vspace{1mm}\\
{\bf Funding:}
This work is supported by the NSFC under the grands 12271269 and the Fundamental Research Funds for the Central Universities.


\normalem

\end{document}